\newtheorem{theorem}{Theorem}[section]
\newtheorem{corollary}[theorem]{Corollary}
\newtheorem{lemma}[theorem]{Lemma} 
\newtheorem{proposition}[theorem]{Proposition}
\numberwithin{equation}{section}
\def\R{\mathbf{R}}
\def\M{\mathbf{M}}
\def\F{\mathbf F}
\def\H{\mathcal H}
\def\bh{\mathbf h}
\def\bbB{\mathbb B}
\def\H{\mathcal H}
\def\G{\mathbf G}
\def\I{\mathcal I}
\def\cC{\mathbb C}
\def\id{\mathbf{1}}
\def\bcero{\mathbf{0}}
\def\bx{\mathbf{x}}
\title[Mathematical programming]{A dynamical-system perspective on mathematical programming}
\author{Pablo Pedregal}
\thanks{
INEI. Universidad de Castilla La Mancha.
Campus de Ciudad Real (Spain). Research supported in part by
MTM2013-47053-P of the MINECO (Spain).
 e-mail:{\tt pablo.pedregal@uclm.es
}}
\begin{document}

\maketitle
%\tableofcontents
    \begin{abstract}
We explore how to build a vector field from the various functions involved in a given mathematical program, and show that equilibria of the corresponding dynamical system are precisely the solutions of the underlying optimality conditions for the original optimization problem. The general situation in which explicit inequality constraints are present is especially interesting as the vector field has to be discontinuous, and so one is led to consider discontinuous dynamical systems and their equilibria. 
    \end{abstract}

%{\bf Key Words.} .

%\vspace{10pt} {\bf AMS(MOS) subject classifications.} 65L05,
%65L20, 47J30, 65D05.
%================
\section{Introduction}
%================
It is well-known that 
the mathematical program
\begin{equation}\label{minimum}
\hbox{Minimize in }\mathbf{x}\in\R^N:\quad f(\mathbf{x})
\end{equation}
for a given smooth function $f$, 
is intimately connected to the underlying (gradient) dynamical system
\begin{equation}\label{flujo}
\mathbf{x}'=-\nabla f(\mathbf{x}).
\end{equation}
Roughly speaking, isolated asymptotically stable equilibria for this system correspond to locally stable solutions (local minima) of the optimization problem above. 
In fact, it is a rather delicate issue to distinguish between local minimality and stability. See \cite{absilkurdyka} for a complete and careful discussion about the relationship between these concepts. To avoid here the discussion on these fine points, which is not part of our objective, we will assume that equilibria of all the dynamical systems involved are isolated. Under this main assumption, it is shown in \cite{absilkurdyka} that local solutions of (\ref{minimum}) (local minima of $f$) exactly correspond to asymptotically stable equilibria of (\ref{flujo}). 

A complementary viewpoint focuses on a vector field $\Psi(\bx)$, and its isolated equilibria. One such equilibrium point $\bx_0$ is locally asymptotically stable if there is a (Lyapunov) function $f$ for which $\bx_0$ is an isolated local minima and $\nabla f(\bx)\cdot\Psi(\bx)<0$, at least in a vicinity of $\bx_0$, except at $\bx_0$ itself. Whenever this is so, the integral curves of $\Psi$ will take us to the local minimum $\bx_0$ of $f$. Obviously, the field $\Psi=-\nabla f$ is one such possibility. 

If we can (and we indeed do) count on efficient numerical algorithms to approximate integral curves for (\ref{flujo}) or of those corresponding to $\Psi$, for which $f$ is a Lyapunov function in a neighborhood of a local minimum, then we can use those to find good approximations for  local solutions of the optimization problem (\ref{minimum}). This is essentially the steepest descent strategy and important variants. We therefore can count on two ways to find approximations for the asymptotically stable critical points of $f$ (local minima):
\begin{enumerate}
\item approximate directly the system of equations $\nabla f(\mathbf{x})=\bcero$, and discern which of those correspond to local minima;
\item use the flow of $-\nabla f$, or of $\Psi$ with $\Psi\cdot\nabla f<0$, to get close to such points.
\end{enumerate}
Our intention is to explore this perspective for a general mathematical program involving both equality and inequality constraints
\begin{equation}\label{probgen}
\hbox{Minimize in }\mathbf{x}\in\R^N:\quad f(\mathbf{x})\quad \hbox{subject to}\quad \mathbf{g}(\mathbf{x})\le\bcero, \mathbf{h}(\mathbf{x})=\bcero,
\end{equation}
where $f:\R^N\to\R$, $\mathbf{g}:\R^N\to\R^m$, $\mathbf{h}:\R^N\to\R^n$, are continuously differentiable. The first strategy above (the one dealing directly with optimality conditions) has been traditionally followed in finding good approximations of locally-stable solutions. This amounts to dealing directly with the Karush-Kuhn-Tucker (KKT) optimality conditions when constraints are to be respected. These are
\begin{equation}\label{kkt}
\nabla f(\mathbf{x})+\mu\nabla \mathbf{g}(\mathbf{x})+\lambda\nabla \mathbf{h}(\mathbf{x})=\bcero,\quad \mu\ge\bcero, \mu \mathbf{g}(\mathbf{x})=0.
\end{equation}
Bear in mind that $\nabla f$ is a $1\times N$-matrix (a vector in $\R^N$), while $\nabla \mathbf{g}$ and $\nabla \mathbf{h}$ are $m\times N$-, and $n\times N$ matrices, respectively; $\mu\in\R^m$, $\lambda\in\R^n$. In addition, the point $\mathbf{x}$ must, of course, be feasible $\mathbf{g}(\mathbf{x})\le\bcero$, $\mathbf{h}(\mathbf{x})=\bcero$. 

Though one can 
still take advantage of those descent strategies by incorporating penalizations designed in various ways to account for those constraints (see, for instance, \cite{hiriarturruty}, \cite{nocedal}), our perspective here is a bit different from these penalization techniques, and stresses the dynamical viewpoint of the second way indicated above:  
\begin{quotation}
Given a mathematical programming problem, design a suitable vector field $\Psi(\bx)$ whose asymptotically stable equilibria are precisely the local solutions of \eqref{probgen}, i.e., the solutions of the underlying Karush-Kuhn-Tucker system \eqref{kkt}.
\end{quotation}
Whenever this objective is accomplished, integral curves for $\Psi$ will take us to local minima of \eqref{probgen}. 

If constraints are an important part of our optimization problem,
it is not clear, to begin with, if solutions of the optimality system might exactly correspond to equilibria for a certain dynamical system. As we have explained, this is the main goal of this contribution: to show how to build such a vector field $\Psi$, and prove that identification between  equilibria, and solutions of the KKT optimality system. This strategy is quite natural, and indeed have been pursued before in various works (see for instance \cite{jongenstein}, \cite{pappalardopassacantando}). However, the dynamical systems (and the dynamics itself) is built in such a way that the feasible set and/or  constraints are constantly being monitored in the dynamical process. From this perspective, we would like to adopt a point of view mainly concerned with global dynamics, so that once the vector field has been determined through the various ingredients of the underlying mathematical program, one forgets altogether about feasibility or constraints, and focuses on the dynamics. 

To make the issue clear, and motivate our objective, 
let us focus on the following simple mathematical program
\begin{equation}\label{probfin}
\hbox{Minimize in }\mathbf{x}\in\R^N:\quad f(\mathbf{x})\quad \hbox{subject to}\quad h(\mathbf{x})=0,
\end{equation}
where both $f, h$ are smooth real functions. We assume that the hypersurface determined by the equation $h(\bx)=0$ is regular at every point, so that $\nabla h$ never vanishes over $h=0$.
Recall that KKT optimality conditions for such a problem read
\begin{equation}\label{optimm}
\nabla f(\mathbf{x})+\lambda\nabla h(\mathbf{x})=\bcero,\quad 
h(\mathbf{x})=0,
\end{equation}
for a multiplier $\lambda$. 
This situation is intimately related to the Lyapunov-function idea but for a function which is not positive definite. If $\Psi$ is a smooth vector field such that $h\, \Psi\cdot\nabla h<0$ whenever $h\neq0$, but the zero set $h=0$ is not a single vector, then the dynamics of $\Psi$ preserves the manifold $h=0$ ($\Psi\cdot\nabla h=0$ if $h=0$), and a lot of things may happen there. Our inspiring result is the following.

\begin{lemma}\label{primero}
Suppose $\overline\bx$ is an isolated local solution of the optimization problem \eqref{probfin} and of \eqref{optimm} 
under the indicated hypotheses on $f$ and $h$. Let $\Psi:\R^N\to\R^N$ be a smooth vector field such that:
\begin{enumerate}
\item $h(\Psi\cdot\nabla h)<0$ in a neighbourhood of $h=0$ but $h\neq0$;
\item $\Psi\cdot\nabla f\le0$ if $h=0$;
\item $\Psi\cdot\nabla f=0$ and $h=0$ imply that $\nabla f$ and $\nabla h$ are parallel.
\end{enumerate}
Then integral curves of the dynamical system $\bx'=\Psi(\bx)$ starting sufficiently close to $\overline\bx$ will converge to $\overline\bx$, namely,
\begin{equation}\label{limit}
\lim_{t\to\infty}\bx(t)=\overline\bx
\end{equation}
provided $\bx(0)$ is already sufficiently close to $\overline\bx$. 
\end{lemma}
\begin{proof}
We argue by contradiction. Suppose \eqref{limit} does not hold always even if starting points $\bx(0)$ are taken sufficiently close to $\overline\bx$. This means that we can find a sequence of vectors $\bx_j(0)\to\overline\bx$, and yet the corresponding integral curves
$$
\cC_j=\{\bx_j(t): t\ge0\}
$$
of  $\bx'=\Psi(\bx)$ are such that there is some sufficiently small positive $r_0$ with
$$
\cC_j\cap\bbB(\overline\bx, r_0)
$$
 not the full $\cC_j$ for $j$ large. Hence, there is some $t_j>0$, so that
 $$
 \bx_j(t_j)\in\partial\bbB(\overline\bx, r_0).
 $$
Because of the  first  condition on $\Psi$ related to $\nabla h$, 
it is clear that
\begin{equation}\label{desigualdad}
\frac{d}{dt}\left[\frac12 h(\bx(t))^2\right]<0
\end{equation}
in the vicinity mentioned in the statement, for every integral curve $\bx(t)$. 
Consequently, the sign of $h$ is preserved in all of the $\cC_j$'s, i.e., $\cC_j$ cannot cross through the manifold $h=0$. By continuity, and bearing in mind again that the size of $h$ decreases steadily over integral curves of $\Psi$ according to \eqref{desigualdad}, there must be a limit integral curve $\bx_\infty(t)$, fully contained in the manifold $h=0$, and
\begin{equation}\label{contradiction}
\bx_\infty(0)=\overline\bx,\quad \overline{\{\bx_\infty(t): t\ge0\}}\cap\bbB(\overline\bx, r_0)\neq\emptyset.
\end{equation}
On the other hand, 
by the other two properties of $\Psi$ related to $\nabla f$ in the statement, over the manifold $h=0$, $f$ is a Lyapunov function for the dynamical system, and there is a unique minimum, $\overline\bx$, of $f$ over $h=0$ in the ball $\bbB(\overline\bx, r_0)$ if $r_0$ is sufficiently small. This means that integral curves of $\bx'=\Psi(\bx)$ on $h=0$ starting sufficiently close to $\overline\bx$ must converge to $\overline\bx$, against \eqref{contradiction}. This contradiction proves that \eqref{limit} is correct as long as $\bx(0)$ is sufficiently close to $\overline\bx$. In particular, we must have $\Psi(\overline\bx)=\bcero$. 
\end{proof}
This lemma is telling us what properties we need on a field $\Psi$ to guarantee that integral curves will take us to the (local) solutions of a minimization problem under constraints like \eqref{probfin}. It is not difficult to find such a $\Psi$ based on the two gradients $\nabla f$ and $\nabla h$. In the statement that follows, $\id$ designates the identity matrix of size $N\times N$, and $\mathbf{a}\otimes \mathbf{b}=\mathbf{a}^T\mathbf{b}$ is the rank-one matrix which is the tensor product of two vectors in $\R^N$. 

\begin{proposition}\label{igualdad}
Assume $\nabla h$ never vanishes over the manifold $\{h=0\}$, and consider the vector field 
\begin{equation}\label{campoo}
\Psi(\mathbf{x})=-h(\mathbf{x})\nabla h(\mathbf{x})-\nabla f(\mathbf{x})\left(\id-\frac{\nabla h(\mathbf{x})}{|\nabla h(\mathbf{x})|}\otimes\frac{\nabla h(\mathbf{x})}{|\nabla h(\mathbf{x})|}\right),
\end{equation}
which is smooth in a vicinity of the manifold $\{h=0\}$. 
Suppose that local solutions of \eqref{probfin} are isolated. 
Then asymptotically stable equilibria of $\Psi$ in that neighbourhood of $\{h=0\}$ are precisely the local solutions (minima) of the mathematical programming problem (\ref{probfin}).
\end{proposition}
Note that the second term in $\Psi$ is the orthogonal projection of $\nabla f(\bx)$ onto the orthogonal complement of the subspace spanned by $\{\nabla h(\mathbf{x})\}$. If we identify row and column vectors in the canonical way, $\Psi$ in \eqref{campoo} can also be written
$$
\Psi(\mathbf{x})=-h(\mathbf{x})\nabla h(\mathbf{x})^T-\left(\id-\frac{\nabla h(\mathbf{x})}{|\nabla h(\mathbf{x})|}\otimes\frac{\nabla h(\mathbf{x})}{|\nabla h(\mathbf{x})|}\right)\nabla f(\mathbf{x})^T,
$$
or even
$$
\Psi(\mathbf{x})=-h(\mathbf{x})\nabla h(\mathbf{x})-\left(\id-\frac{\nabla h(\mathbf{x})}{|\nabla h(\mathbf{x})|}\otimes\frac{\nabla h(\mathbf{x})}{|\nabla h(\mathbf{x})|}\right)\nabla f(\mathbf{x})^T.
$$
\begin{proof}
 After the previous lemma, all that is required to check is that $\Psi$, defined by \eqref{campoo}, complies with all those requirements in the statement of the lemma. It is elementary to check that it is so. 
\end{proof}

It is clear how the vector field $\Psi$ has been tailored precisely to have as equilibria the (local) solutions of the mathematical program. If we knew the precise value of the multiplier $\lambda$ before hand, then obviously we could examine the dynamical system
$$
\bx'(t)=-\nabla f(\bx(t))-\lambda \nabla h(\bx(t))
$$
near the manifold $\{h=0\}$. 
The whole point is to design a vector field taking us to the solutions of programming problems without knowing multipliers. 

As indicated earlier, the practical implication of this approach can hardly be hidden: by approximating integral curves for $\Psi$ through typical but refined numerical procedures, we can find approximations of the solutions of mathematical programs. One of the most appealing advantages of this approach is that one can start the dynamical process with arbitrary initializations without taking any care about whether they are feasible or not. 
Under further constraints, global stability may be established. 

\begin{corollary}
Suppose $f:\R^N\to\R$ is coercive in the sense
$$
\lim_{|\mathbf{x}|\to\infty}f(\mathbf{x})=+\infty,
$$
and $\nabla h$ never vanishes. If there is a unique solution for the corresponding KKT optimality conditions, then every integral curve of the associated vector field $\Psi$ converges to such a unique solution.
\end{corollary}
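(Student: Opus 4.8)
The plan is to run LaSalle's invariance principle on top of the two Lyapunov functions already isolated in the proof of Proposition \ref{igualdad}, turning coercivity of $f$ into precompactness of orbits and the uniqueness hypothesis into convergence to a single point. I would organize the argument as a cascade. The first, globally valid, ingredient is that $W(\mathbf{x})=h(\mathbf{x})^2$ is non-increasing along every integral curve, since $\frac{d}{dt}W=-2h^2|\nabla h|^2\le0$ everywhere; consequently each orbit is confined for all time to the slab $\{\,|h|\le|h(\mathbf{x}(0))|\,\}$ and, wherever $h\ne0$, $W$ strictly decreases, so the orbit is pushed toward the feasible manifold $\{h=0\}$. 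The second ingredient, valid on that manifold, is relation (\ref{flujoproyectado}): restricted to $\{h=0\}$ the flow is tangent to the manifold and $f$ is non-increasing, strictly so away from the points where $\nabla f$ and $\nabla h$ are parallel, i.e. away from the equilibria detected in part (1) of the Proposition.

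The step I expect to be the main obstacle is to show that each orbit stays in a compact set. One would like to use the coercive function $V=f+\tfrac12h^2$, whose sublevel sets are compact because $f$ is coercive; the difficulty is that $V$ is not monotone, since a direct computation gives
$$
\frac{d}{dt}V=-\left|(\id-\mathbf{H})\nabla f^T\right|^2-h^2|\nabla h|^2-h\,\langle\nabla f,\nabla h\rangle,
$$
and the last cross term has no definite sign while the orbit is still far from $\{h=0\}$. Coercivity of $f$ therefore does not by itself trap the trajectory. I would close this gap by combining the slab confinement with the (in fact exponential) decay of $|h(\mathbf{x}(t))|$ coming from $\frac{d}{dt}h=-h|\nabla h|^2$: the orbit is driven into the region where the cross term is negligible and $f$ genuinely decreases, and one argues that the bounded transient growth of $f$ before this happens cannot carry the orbit off to infinity. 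This yields a compact positively invariant set containing the orbit, and hence a non-empty, compact, connected, invariant $\omega$-limit set $\omega$.

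It remains to identify $\omega$. By the global monotonicity of $W$, LaSalle's invariance principle places $\omega$ inside the largest invariant subset of $\{\frac{d}{dt}W=0\}=\{h=0\}$; since the manifold $\{h=0\}$ is itself invariant, this only gives $\omega\subseteq\{h=0\}$. To finish, I would use the standard fact that a continuous function which is non-increasing along the flow is constant on any $\omega$-limit set: applying this to $f$ on the invariant set $\omega\subseteq\{h=0\}$, where $\frac{d}{dt}f=-|(\id-\mathbf{H})\nabla f^T|^2\le0$, forces $(\id-\mathbf{H})\nabla f^T=\bcero$ throughout $\omega$. Together with $h=0$ this means, by part (1) of Proposition \ref{igualdad}, that $\omega$ consists entirely of solutions of the KKT system (\ref{optim}). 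As that solution is unique by hypothesis, $\omega$ reduces to a single point $\mathbf{x}^*$, whence $\mathbf{x}(t)\to\mathbf{x}^*$. Since $\mathbf{x}(0)$ was arbitrary, every integral curve of $\Psi$ converges to the unique solution, as claimed.
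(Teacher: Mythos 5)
Your overall strategy---LaSalle's invariance principle driven by the two Lyapunov functions $h^2$ and $f$ already isolated in the proof of Proposition~\ref{igualdad}---is the natural way to complete this corollary (the paper itself states it without any proof), and your identification of the limit set is correct \emph{once precompactness of the orbit is granted}: monotonicity of $h^2$ together with $\nabla h\neq\bcero$ places the $\omega$-limit set inside $\{h=0\}$; constancy of $f$ on the $\omega$-limit set forces $(\id-\mathbf{H})\nabla f^T=\bcero$ there; and part (1) of the Proposition plus the uniqueness hypothesis collapses the limit set to a single point.

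However, the step you yourself flag as the main obstacle---confinement of every orbit to a compact set---is genuinely not proved, and the patch you sketch does not close it. First, the identity $\frac{d}{dt}h=-h|\nabla h|^2$ yields \emph{exponential} decay only if $|\nabla h|^2$ is bounded below along the orbit; the hypothesis is merely that $\nabla h$ never vanishes, so along a putative unbounded orbit $|\nabla h(\mathbf{x}(t))|$ may tend to $0$, and $|h(\mathbf{x}(t))|$, although non-increasing, need not tend to $0$ at any definite rate, or at all. Second, even where $h$ is small, the cross term $-h\,\langle\nabla f,\nabla h\rangle$ is ``negligible'' only relative to a bound on $|\nabla f|\,|\nabla h|$, and no such bound exists off a compact set---so this part of your argument implicitly uses the very boundedness it is meant to establish. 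Third, the ``bounded transient growth of $f$'' is asserted, not proved: without an a priori bound on the integral of $|h|\,|\langle\nabla f,\nabla h\rangle|$ along the orbit, coercivity of $f$ produces no trapping region, and even forward global existence is in question, since an unbounded solution could a priori blow up in finite time. To repair the proof one needs some additional quantitative hypothesis (for instance $|\nabla h|$ bounded below together with growth control on $|\nabla f|\,|\nabla h|$, or properness of $h$, or a condition making $f+\tfrac12h^2$ decrease outside a compact set), none of which is available from the statement as given. In short: the LaSalle endgame and the use of uniqueness are sound, but the proposal has a hole exactly at the step on which everything else depends.
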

Explicit assumptions ensuring the uniqueness in this statement amount to asking for the strict convexity of $f$ and the linearity (affinity) of $h$ as in classic results of sufficiency of optimality conditions (\cite{nocedal}). 

The main concern in this contribution is to design a field $\Psi$ for a typical non-linear mathematical program in full generality
\begin{equation}\label{probgen0}
\hbox{Minimize in }\mathbf{x}\in\R^N:\quad f(\mathbf{x})\quad \hbox{subject to}\quad \mathbf{g}(\mathbf{x})\le\bcero, \mathbf{h}(\mathbf{x})=\bcero,
\end{equation}
where $f:\R^N\to\R$, $\mathbf{g}:\R^N\to\R^m$, $\mathbf{h}:\R^N\to\R^n$, are smooth, and comply with typical constraint qualifications that will be indicated appropriately. In particular, we would like to understand the different role played by inequality and equality constraints. Note that the vector field in (\ref{campoo}) is continuous provided that $\nabla h$ never vanishes. This is the situation for problems with just equality constraints, as we will briefly sketch in Section 2. However, when, in addition to equality, there are also inequality constraints to be respected, the corresponding vector field cannot be continuous (though it is piecewise continuous). Associated with (\ref{probgen0}), our proposal consists in considering the vector field
\begin{equation}\label{campo}
\Psi(\mathbf{x})=\begin{cases}-\mathbf{h}(\mathbf{x})\nabla \mathbf{h}(\mathbf{x})-\left(\id-\mathbf{H}(\mathbf{x})\right)\G(\mathbf{x})^T,& G(\mathbf{x})>0,\\
-\mathbf{h}(\mathbf{x})\nabla \mathbf{h}(\mathbf{x})-\left(\id-\mathbf{H}(\mathbf{x})\right)\nabla f(\mathbf{x})^T,& G(\mathbf{x})\le0,
\end{cases}
\end{equation}
where
\begin{gather}
G(\mathbf{x})=\max{g_i(\mathbf{x})},\quad \I(\mathbf{x})=\{i: g_i(\mathbf{x})=G(\mathbf{x})\},\quad \G(\mathbf{x})=\sum_{i\in\I(\mathbf{x})}\nabla g_i(\mathbf{x}),\nonumber\\
 \mathbf{H}(\mathbf{x})=\nabla \mathbf{h}(\mathbf{x})^T(\nabla \mathbf{h}(\mathbf{x})\nabla \mathbf{h}(\mathbf{x})^T)^{-1}\nabla \mathbf{h}(\mathbf{x}).\nonumber
\end{gather}
This vector field is clearly not continuous through $G=0$. This fact leads us to the theory of discontinuous dynamical systems. 
Despite the fact that this is a rather specialized area which is not a typical part of dynamical system theory, the subject is rather well understood, at least as long as our needs are concerned here. The standard reference for this subject is \cite{filippov}, although there are very nice recent accounts (\cite{cortes}). 

Our main result follows. We further put
$$
\H(\mathbf{x})=\hbox{subspace spanned by }\{\nabla \mathbf{h}(\mathbf{x})\}.
$$
\begin{theorem}\label{variasdes}
Let $f:\R^N\to\R$, $\mathbf{h}:\R^N\to\R^n$, $\mathbf{g}:\R^N\to\R^m$ be continuously differentiable maps such that
\begin{itemize}
\item $\nabla \mathbf{h}(\mathbf{x})\nabla \mathbf{h}(\mathbf{x})^T$ is an invertible $n\times n$-matrix for all $\mathbf{x}$ in a neighbourhood of the set $\bh=\bcero$;
\item for each $\mathbf{x}$ where $G(\mathbf{x})=0$, the intersection of the subspaces $\H(\mathbf{x})$ and the one spanned by $\{\nabla g_i(\mathbf{x}): i\in\I(\mathbf{x})\}$ is the trivial subspace;
\item the null vector is never in the convex hull of the set $\{\nabla g_i(\mathbf{x}): i\in\I(\mathbf{x})\}$ when $G(\mathbf{x})=0$;
\item solutions of the corresponding Karush-Kuhn-Tucker optimality conditions
\begin{equation}\label{KKTd}
\nabla f(\mathbf{x})+\mu\nabla \mathbf{g}(\mathbf{x})+\lambda\nabla \mathbf{h}(\mathbf{x})=\bcero,\quad \mu\ge\bcero, \mu \mathbf{g}(\mathbf{x})=0,
\end{equation}
complying with $\mathbf{g}(\mathbf{x})\le\bcero$, $\mathbf{h}(\mathbf{x})=\bcero$, are isolated.
\end{itemize}
Then,  asymptotically stable equilibria for $\Psi$ in \eqref{campo} (in the generalized sense of Filippov) are exactly the feasible points $\mathbf{x}$, $\mathbf{g}(\mathbf{x})\le\bcero$, $\mathbf{h}(\mathbf{x})=\bcero$, which are solutions of (\ref{KKTd}), and correspond to a local optimal solution (minima) of  (\ref{probgen0}). 
\end{theorem}

After dealing with the general situation for mathematical programs with no inequality constraints in Section 2, we concentrate, in Section 3, in a simple problem with just one inequality constraint to more clearly see the need for understanding discontinuous dynamical systems. It can also serve as a first explicit example of a discontinuous dynamical system whose equilibria correspond to (local) solutions of mathematical programs. 
We also include in this section a discussion on the basic facts about Filippov's extension of  such fields. This material is taken essentially from \cite{filippov}. For such systems, there is a number of generalized notions of solutions across an interface of discontinuity. The one relevant for our purposes is the one introduced by Filippov leading to the notion of sliding modes. It is apparently also the one suitable for mechanics (\cite{cortes}), but it  adapts perfectly to our objective as well. After that, Section 4 contains a preliminary discussion when there is just one inequality constraint present, but possibly many equality restrictions. It serves us as a preliminary step towards the final result. Section 5 extends those ideas to the full, general mathematical program. Finally, Section 6 focuses briefly on the 
particular case of linear programming. 

Though the interest on discontinuous dynamical system is quite recent, there is a growing body of literature about this topic. 
There is however not many references on the specific topic of this work, as the perspective is, to the best of our knowledge, new. Concerning equilibria of discontinuous dynamical systems, see \cite{llibremereu}, \cite{llibreponce}. In \cite{gughai}, a close analysis is performed about some  situations of discontinuous dynamical systems through transversal interfaces. 
For the numerical approximation of discontinuous systems, one can look at the nice review \cite{calmonran}. \cite{smirnov} is a standard reference for differential inclusions which is also an intimately related area as we will recall. 

There are several important issues to be explored once the way to build the underlying vector field $\Psi$ has been well established, and its main property (Theorem \ref{variasdes}) proved. We name three such future directions which might be relevant to assess the possible impact of this perspective on both areas, discontinuous dynamical systems and mathematical programming:
\begin{enumerate}
\item study of equilibra of discontinuous vector fields coming from explicit, well-chosen mathematical programs;
\item global stability for the discontinuous $\Psi$ corresponding to a general mathematical program for which sufficiency of optimality conditions guarantee a unique global minimum;
\item test the numerical implementation to approximate solutions of mathematical programs by following the integral curves of the underlying field $\Psi$.
\end{enumerate}

%================
\section{The equality situation}
%================
Let us first deal with the situation of a mathematical program in which no inequality is involved, but we could have many equality constraints
\begin{equation}\label{probeq}
\hbox{Minimize in }\mathbf{x}\in\R^N:\quad f(\mathbf{x})\quad \hbox{subject to}\quad \mathbf{h}(\mathbf{x})=\bcero,
\end{equation}
where $f:\R^N\to\R$, $\mathbf{h}:\R^N\to\R^n$ ($n<N$), are smooth. Note that the Jacobian matrices $\nabla f$, $\nabla \mathbf{h}$ are $1\times N$-, and $n\times N$-matrices, respectively. We assume that the manifold determined by the equations $\bh=\bcero$ is a non-empty, smooth manifold. This means that in a suitable neighbourhood of such set
\begin{equation}\label{compatibilidad}
h_i h_j \nabla h_i\cdot\nabla h_j\ge0,\quad \bh=(h_i),
\end{equation}
for all $i, j$. This condition establishes that the gradients of the square of the components 
$$
\nabla \left[\frac12 h_i(\bx)^2\right]
$$
cannot have different sign in such a  neighbourhood, because if different  signs of two such gradients persist arbitrarily close to the constraint set, that constraint set could not be the zero set of $\bh$. 

The generalization of the results in the Introduction to this more general setting is pretty straightforward. Consider the vector field $\Psi:\R^N\to\R^N$ given by
\begin{equation}\label{ge}
\Psi(\mathbf{x})=-\mathbf{h}(\mathbf{x})\nabla \mathbf{h}(\mathbf{x})-\left(\id-\mathbf{H}(\mathbf{x})\right)\nabla f(\mathbf{x})^T, \quad \mathbf{H}(\mathbf{x})=\nabla \mathbf{h}(\mathbf{x})^T(\nabla \mathbf{h}(\mathbf{x})\nabla \mathbf{h}(\mathbf{x})^T)^{-1}\nabla \mathbf{h}(\mathbf{x}).
\end{equation}
For $\Psi$ to be well-defined and  continuous, we need $f$ and $\mathbf{h}$ to be continuously differentiable, and the constraint qualification that the $n\times n$-matrix $\nabla \mathbf{h}(\mathbf{x})\nabla \mathbf{h}(\mathbf{x})^T$ be invertible for all $\mathbf{x}\in\R^N$, or at least when $\bh=\bcero$. As above, the second term in $\Psi$ accounts for the projection, onto the orthogonal complement of the subspace generated by the gradients of the constraints, of $\nabla f$. For future reference, we will put
$$
\mathbf{H}(\mathbf{x})\equiv\nabla \mathbf{h}(\mathbf{x})^T(\nabla \mathbf{h}(\mathbf{x})\nabla \mathbf{h}(\mathbf{x})^T)^{-1}\nabla \mathbf{h}(\mathbf{x}),\quad \H(\mathbf{x})=\hbox{subspace spanned by }\{\nabla \mathbf{h}(\mathbf{x})\}.
$$

For the sake of completeness, we restate a proposition and corollary as in the Introduction. 

\begin{proposition}\label{igualdadd}
Suppose that the functions involved in (\ref{probeq}) are continuously differentiable, and the $n\times n$-matrix $\nabla \mathbf{h}(\mathbf{x})\nabla \mathbf{h}(\mathbf{x})^T$ is always invertible (at least in a neighbourhood of $\{\bh=\bcero\}$), so that the associated vector field $\Psi$ is well-defined and continuous. Suppose that the solutions of the underlying KKT system 
\begin{equation}\label{kakate}
\nabla f(\mathbf{x})+\lambda\nabla \mathbf{h}(\mathbf{x})=\bcero,\quad \mathbf{h}(\mathbf{x})=\bcero,
\end{equation}
are isolated. 

Then equilibria of $\Psi$ are exactly the solutions of this system (and thus isolated), and those that are asymptotically stable correspond to local optimal solutions (minima) of (\ref{probeq}). 
\end{proposition}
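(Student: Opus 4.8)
The plan is to imitate, essentially verbatim, the argument given for Proposition \ref{igualdad}, upgrading scalars to vectors and the rank-one projection to the projection $\mathbf{H}$ onto $\H$. The structural fact I would isolate first is the orthogonal decomposition $\R^N=\H(\mathbf{x})\oplus\H(\mathbf{x})^\perp$ together with the observation that the two summands of $\Psi$ live in complementary subspaces: transposing to columns, the first term $-\nabla\mathbf{h}^T\mathbf{h}^T$ is a linear combination of the gradients $\nabla h_i$ and hence lies in $\H$, while the second term $-(\id-\mathbf{H})\nabla f^T$ lies in $\H^\perp$ by the definition of the projection. The constraint qualification that $\nabla\mathbf{h}\,\nabla\mathbf{h}^T$ be invertible is exactly what makes $\mathbf{H}$ a well-defined orthogonal projector onto $\H$ and guarantees the gradients are linearly independent, so these two pieces are genuinely orthogonal.

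For part (1) I would argue both inclusions. If $\mathbf{x}$ solves the KKT system \eqref{kakate}, then $\mathbf{h}(\mathbf{x})=\bcero$ kills the first term, while $\nabla f+\lambda\nabla\mathbf{h}=\bcero$ forces $\nabla f^T\in\H$, so $(\id-\mathbf{H})\nabla f^T=\bcero$ and $\Psi(\mathbf{x})=\bcero$. Conversely, if $\Psi(\mathbf{x})=\bcero$, the orthogonality just noted makes each summand vanish separately: vanishing of the $\H^\perp$-part gives $\nabla f^T\in\H$, i.e.\ $\nabla f+\lambda\nabla\mathbf{h}=\bcero$ for some row vector $\lambda$; vanishing of the $\H$-part gives $\nabla\mathbf{h}^T\mathbf{h}^T=\bcero$, and since $\nabla\mathbf{h}^T$ has full column rank $n$ (invertibility of $\nabla\mathbf{h}\,\nabla\mathbf{h}^T$), this forces $\mathbf{h}(\mathbf{x})=\bcero$. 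Thus equilibria and KKT solutions coincide, and isolation is inherited from the hypothesis.

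For part (2) I would run the two-Lyapunov-function argument. Using $\nabla(|\mathbf{h}|^2)=2\,\mathbf{h}\nabla\mathbf{h}$ and the orthogonality of the two summands, I get $\langle\nabla(|\mathbf{h}|^2),\Psi\rangle=-2|\nabla\mathbf{h}^T\mathbf{h}^T|^2\le0$, vanishing exactly on $\mathbf{h}=\bcero$; hence $|\mathbf{h}|^2$ is a Lyapunov function whose decrease confines all accumulation points to the feasible manifold. On that manifold the first term of $\Psi$ disappears, leaving $\Psi=-(\id-\mathbf{H})\nabla f^T\in\H^\perp$, which is tangent to $\{\mathbf{h}=\bcero\}$, so trajectories started on the manifold remain there; and there $\langle\nabla f,\Psi\rangle=-|(\id-\mathbf{H})\nabla f^T|^2\le0$ exhibits the restricted flow as the projected gradient descent of $f$ along the manifold, whose asymptotically stable equilibria are precisely the local minima of the constrained problem, by the classical unconstrained correspondence of \cite{absilkurdyka} applied on the manifold.

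The step I expect to carry the real weight is the final passage from the dynamics \emph{on} the manifold to the dynamics of $\Psi$ in a full neighbourhood in $\R^N$: one must argue that, because $|\mathbf{h}|^2$ strictly decreases off the manifold and the equilibria are isolated, integral curves started near $\{\mathbf{h}=\bcero\}$ share the asymptotic limits of the curves lying on it, so that asymptotic stability in $\R^N$ and asymptotic stability within the manifold agree. This is exactly the continuity/persistence argument sketched at the end of the proof of Proposition \ref{igualdad}, and it is where the isolation hypothesis and the invariance of the manifold are genuinely used; everything else is bookkeeping with the orthogonal splitting $\H\oplus\H^\perp$.
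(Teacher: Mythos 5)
Your proposal is correct and follows essentially the same route as the paper: the paper gives no separate proof for this proposition, explicitly treating it as the ``pretty straightforward'' generalization of Proposition~\ref{igualdad}, whose proof you reproduce faithfully --- the orthogonal splitting of $\Psi$ into its $\H$ and $\H^\perp$ components for the equilibria--KKT identification, followed by the two Lyapunov functions $|\mathbf{h}|^2$ and $f$, tangency of $\Psi$ to the feasible manifold, and the continuity/isolation argument for transferring stability from the manifold to a full neighbourhood. Your version is, if anything, slightly more careful than the original in noting that $\langle\nabla(|\mathbf{h}|^2),\Psi\rangle=-2|\nabla\mathbf{h}^T\mathbf{h}^T|^2$ vanishes exactly on $\mathbf{h}=\bcero$ via the full column rank of $\nabla\mathbf{h}^T$, a point the scalar case handled with the simpler factor $-2h^2|\nabla h|^2$.
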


The proof of such a result is based on the main properties of the vector field $\Psi$, similar to those stated in Lemma \ref{primero}, namely,
\begin{enumerate}
\item $h_i(\Psi\cdot\nabla h_i)<0$ in a neighbourhood of $h_i=0$ ($h_i\neq0$) for all $i$;
\item $\bh=\bcero$ implies $\Psi\cdot\nabla \bh=\bcero$;
\item $\Psi\cdot\nabla f\le0$ if $\bh=\bcero$;
\item $\Psi\cdot\nabla f=0$ and $\bh=\bcero$ imply that $\nabla f\in \H$.
\end{enumerate}
These properties are elementary to check for $\Psi$ given in \eqref{ge}. Recall \eqref{compatibilidad}. 

For a global result, we need to have more strict hypotheses on the structure of the mathematical progam. 
\begin{corollary}
Suppose, in addition to the required smoothness, that  $f:\R^N\to\R$ is coercive in the sense
$$
\lim_{|\mathbf{x}|\to\infty}f(\mathbf{x})=+\infty,
$$
and $\nabla \mathbf{h}(\mathbf{x})\nabla \mathbf{h}(\mathbf{x})^T$ is always non-singular. 
If there is a unique solution for (\ref{kakate}), then every integral curve of the associated vector field $\Psi$ converges to such a unique solution.
\end{corollary}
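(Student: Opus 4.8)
The plan is to reduce the global statement to Proposition~\ref{igualdadd} together with a global (LaSalle-type) version of the Lyapunov computations carried out there. First I would pin down the unique equilibrium. By Proposition~\ref{igualdadd} the equilibria of $\Psi$ are exactly the solutions of (\ref{kakate}), so by hypothesis there is a single one, call it $\mathbf{x}^*$. Since $f$ is coercive and the feasible manifold $\{\mathbf{h}=\bcero\}$ is closed and nonempty (it contains $\mathbf{x}^*$), the restriction of $f$ to it attains a global minimum; under the constraint qualification that $\nabla\mathbf{h}\nabla\mathbf{h}^T$ is invertible (which is LICQ), that minimizer solves (\ref{kakate}) and hence coincides with $\mathbf{x}^*$. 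Thus $\mathbf{x}^*$ is the global minimum of (\ref{probeq}), in particular a local one, so by the Proposition it is asymptotically stable.

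Next I would set up the global bookkeeping. Recomputing as in the proof of the Proposition, along any trajectory one has $\frac{d}{dt}\tfrac12|\mathbf{h}|^2=-|\nabla\mathbf{h}^T\mathbf{h}|^2\le0$, with equality only when $\mathbf{h}=\bcero$ (invertibility of $\nabla\mathbf{h}\nabla\mathbf{h}^T$ makes $\nabla\mathbf{h}^T$ injective), and once on $\{\mathbf{h}=\bcero\}$ one has $\frac{d}{dt}f=-|(\id-\mathbf{H})\nabla f^T|^2\le0$, vanishing exactly at KKT points. With these in hand I would invoke LaSalle's invariance principle: the $\omega$-limit set of a precompact forward orbit is nonempty, compact, connected and invariant, and is contained in the largest invariant set on which both derivatives vanish, namely $\{\mathbf{h}=\bcero\}\cap\{(\id-\mathbf{H})\nabla f^T=\bcero\}$, which is precisely the equilibrium (KKT) set. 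Since that set is the single point $\mathbf{x}^*$, connectedness and nonemptiness force the $\omega$-limit set to be $\{\mathbf{x}^*\}$, i.e.\ the orbit converges to $\mathbf{x}^*$.

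The step I expect to be the main obstacle is precompactness of the orbits — ruling out escape to infinity — which is exactly where coercivity must enter. The natural candidate is $V=f+\tfrac12|\mathbf{h}|^2$, whose sublevel sets are compact because $V\ge f$ and $f$ is coercive, so an orbit confined to a sublevel set of $V$ is bounded. The difficulty is that $V$ need not decrease off the manifold: a direct computation gives $\frac{d}{dt}V=-|\nabla\mathbf{h}^T\mathbf{h}|^2-|(\id-\mathbf{H})\nabla f^T|^2-\langle\mathbf{H}\nabla f^T,\nabla\mathbf{h}^T\mathbf{h}\rangle$, and the last, indefinite cross term has no a~priori favorable sign. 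To close this I would use that the tube $\{|\mathbf{h}|\le|\mathbf{h}(\mathbf{x}_0)|\}$ is forward invariant (since $|\mathbf{h}|^2$ is non-increasing), confining the orbit to a region where $\mathbf{h}$ is bounded, and there control the cross term — for instance through a weighted energy $f+\tfrac{\alpha}{2}|\mathbf{h}|^2$ and a Young absorption estimate, or by exploiting that the cross term is of order $|\mathbf{h}|$, so that it cannot sustain unbounded growth of $f$ once $|\mathbf{h}|$ has been driven small. Establishing this confinement rigorously is the crux; everything after it is the routine LaSalle argument combined with the uniqueness hypothesis.
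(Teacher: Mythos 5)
The paper never actually proves this corollary---it is stated after Proposition~\ref{igualdadd} as following from the same Lyapunov technique---so your proposal has to stand on its own. Your overall architecture (unique equilibrium, the two functions $|\mathbf{h}|^2$ and $f$, LaSalle) is exactly the route the paper intends, and your derivative computations are correct: $\tfrac{d}{dt}\tfrac12|\mathbf{h}|^2=-|\nabla\mathbf{h}^T\mathbf{h}|^2$, with the cross term killed by $\nabla\mathbf{h}(\id-\mathbf{H})=\bcero$. But the proof is not complete, and the gap is the one you flag yourself: precompactness of forward orbits. None of the repairs you sketch closes it. Forward invariance of the tube $\{|\mathbf{h}|\le|\mathbf{h}(\mathbf{x}_0)|\}$ gives nothing, because that tube is in general an unbounded set (for affine $\mathbf{h}$ it is a slab), so confinement to it does not bound the orbit. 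The Young-absorption idea fails structurally: in $\tfrac{d}{dt}\bigl(f+\tfrac{\alpha}{2}|\mathbf{h}|^2\bigr)$ the cross term is bounded by $|\mathbf{H}\nabla f^T|\,|\nabla\mathbf{h}^T\mathbf{h}|$, and after Young's inequality you are left with a positive term $\tfrac12|\mathbf{H}\nabla f^T|^2$ that has nothing to be absorbed into---the dissipation only controls the component $(\id-\mathbf{H})\nabla f^T$, never $\mathbf{H}\nabla f^T$. Likewise, the cross term is \emph{not} of order $|\mathbf{h}|$ uniformly: it carries a factor $|\nabla f|\,|\nabla\mathbf{h}|$, which is unbounded along a hypothetically escaping orbit. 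Finally, you cannot even assert that $|\mathbf{h}|\to0$: since $\nabla\mathbf{h}\nabla\mathbf{h}^T$ is only assumed pointwise invertible, its smallest eigenvalue may degenerate at infinity, so $-|\nabla\mathbf{h}^T\mathbf{h}|^2$ can vanish asymptotically while $|\mathbf{h}|$ stagnates at a positive level. Some additional hypothesis (uniform bound below on $\nabla\mathbf{h}\nabla\mathbf{h}^T$, properness of $f+\tfrac12|\mathbf{h}|^2$, or simply assuming orbits are bounded) seems genuinely needed; the paper's statement silently skips this.

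A secondary, repairable defect: even granting precompactness, your one-shot LaSalle step is not a valid application of the theorem. With $V=|\mathbf{h}|^2$ (the only function whose derivative is globally $\le0$), LaSalle places the $\omega$-limit set inside the largest invariant subset of $\{\nabla\mathbf{h}^T\mathbf{h}=\bcero\}=\{\mathbf{h}=\bcero\}$, and that largest invariant subset is the \emph{whole} feasible manifold (it is invariant), not the equilibrium set; $f$ cannot be thrown in simultaneously because $\dot f$ is sign-indefinite off the manifold, as you yourself observe. You need a second stage: the $\omega$-limit set $\Omega$ is a compact invariant subset of $\{\mathbf{h}=\bcero\}$, on which the flow is the projected gradient flow with $\dot f=-|(\id-\mathbf{H})\nabla f^T|^2$, strictly negative off equilibria; then for each $\mathbf{y}\in\Omega$ both its $\alpha$- and $\omega$-limit sets equal $\{\mathbf{x}^*\}$ (the unique equilibrium), and monotonicity of $f$ along its orbit forces $f\equiv f(\mathbf{x}^*)$ there, hence the orbit is the equilibrium itself and $\Omega=\{\mathbf{x}^*\}$. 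Written this way the LaSalle part is sound; as you stated it, it is not.
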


%================
\section{A discontinuous vector field}
%================
To better understand the source of the discontinuity of the vector field, let us explore a mathematical program with a single inequality constraint and no equality restriction
\begin{equation}\label{probsen}
\hbox{Minimize in }\bx\in\R^N:\quad f(\bx)\quad\hbox{ subject to }\quad g(\bx)\le0.
\end{equation}
It is tempting to play with formula \eqref{campoo}, and try to modify it to accommodate, in a continuous way, the constraint $g\le0$ in the inequality form. After all, the restriction set can be equivalently written $g^+(\bx)=0$ where $g^+$ is the positive part of $g$. However, the behavior of the field $\Psi$ has to be drastically (discontinuously) different when $g>0$, pursuing the effect of a field like \eqref{campoo}, from points where $g\le0$, in which case $f$ has to be a Lyapunov function for $\Psi$. 
Our proposal in this situation is 
\begin{equation}\label{propuesta}
\Psi(\bx)=\begin{cases}-\nabla g(\bx),& g(\bx)>0,\\-\nabla f(\bx),& g(\bx)\le0.\end{cases}
\end{equation}
The main issue is how to detect equilibria $\bx$ in the interface where $g(\bx)=0$, since demanding $\Psi(\bx)=\bcero$ does not make much sense for such points. Notice that, even if $\Psi$ has been defined on the interface as $-\nabla f$, these values are absolutely irrelevant: what matters is the behaviour at both sides of the interface. The theory of discontinuous dynamical systems that we need is Filippov's. 

Recall that KKT optimality conditions for \eqref{probsen} read
\begin{equation}\label{optsen}
\nabla f(\bx)+\mu\nabla g(\bx)=\bcero,\quad \mu g(\bx)=0, \quad\mu\ge0, g(\bx)\le0.
\end{equation}

%================
\subsection{Filippov solutions for discontinuous vector fields}
%================
One important basic reference for solutions of discontinuous dynamical systems is \cite{filippov}. This is especially so because we are interested precisely in Filippov solutions of discontinuous dynamical systems. There are various different notions for solutions of such systems. But the one introduced by Filippov is the one that better suits our discussion here. There are however more recent presentations of the main ideas for discontinuous dynamical systems that we will mainly follow in this section. In particular, we will take some basic material from \cite{cortes}. There are many applications in which discontinuous dynamical system occur. We refer to \cite{cortes} for a rather interesting discussion of relevant examples. Sliding modes is an important term in this field. 

The basic starting point to understand solutions to discontinuous dynamical systems (and of Filippov solutions in particular) is the notion of differential inclusion (\cite{smirnov})
\begin{equation}\label{incdif}
\mathbf{x}'(t)\in \F(\mathbf{x}(t)),\quad \mathbf{x}\in\R^N, \F:\R^N\mapsto 2^{\R^N},
\end{equation}
where now the right-hand side is not simply a map from $\R^N$ into $\R^N$, but rather a full subset of $\R^N$ is associated with each vector $\bx$ in $\R^N$. The differential inclusion itself asks for an absolutely continuous path $\mathbf{x}:J\to\R^N$ ($J$ is some fixed interval in $\R$) so that the derivative $\mathbf{x}'$ belongs to $\F(\mathbf{x})$ for a.e. $t\in J$. This kind of solutions are typically called Carath\'eodory solutions for (\ref{incdif}). As a matter of fact, one can consider Carath\'eodory solutions for discontinuous vector fields
\begin{equation}\label{disc}
\mathbf{x}'(t)=F(\mathbf{x}(t)),\quad t\in J
\end{equation}
by simply demanding that this equality holds for all times except a subset of vanishing measure, $\mathbf{x}$ is absolutely continuous, and $\mathbf{x}(0)=\mathbf{x}_0$. Alternatively, $\mathbf{x}$ is required to comply with the integral form of the system
$$
\mathbf{x}(t)=\mathbf{x}_0+\int_0^t F(\mathbf{x}(s))\,ds
$$
for all times $t\in J$. 

The existence of Carath\'eodory solutions for (\ref{incdif}) requires some properties of the set-valued map $\F$. A typical existence theorem is the following (\cite{smirnov}). We just state the autonomous version which suffices for our purposes in this work. 
\begin{theorem}\label{exis}
Suppose that the set-valued map $\F:\R^N\mapsto 2^{\R^N}$, taking on values in the Borel sets of $\R^N$, is upper-semicontinuous, and the sets $\F(\mathbf{x})$ are non-empty, compact and convex for all $\mathbf{x}\in\R^N$. Then  there are (local) solutions for (\ref{incdif}) starting from arbitrary points $\mathbf{x}_0\in\R^N$. 
\end{theorem}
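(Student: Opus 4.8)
The result is a classical existence theorem for upper-semicontinuous differential inclusions with compact convex values, and the plan is to follow the standard three-step scheme: localize and bound, build approximate solutions, and then pass to the limit through a closure argument.

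First I would localize. Since $\F(\mathbf{x}_0)$ is compact and $\F$ is upper-semicontinuous, there is a closed ball $\overline{B}(\mathbf{x}_0,r)$ and a constant $M>0$ with $\F(\mathbf{x})\subseteq\overline{B}(\bcero,M)$ for every $\mathbf{x}$ in that ball. Choosing $T>0$ with $MT\le r$ guarantees that any candidate path starting at $\mathbf{x}_0$ and obeying $|\mathbf{x}'|\le M$ cannot leave the ball on $[0,T]$; this is the interval on which I produce a local solution.

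Next I would construct approximate solutions. Using Cellina's approximate selection theorem---valid precisely because $\F$ is upper-semicontinuous with nonempty compact convex values---I produce, for each $\varepsilon_k\downarrow0$, a continuous single-valued map $\mathbf{f}_k$ whose graph lies in the $\varepsilon_k$-neighborhood of the graph of $\F$ and whose values remain in $\overline{B}(\bcero,M)$. Peano's theorem then yields a solution $\mathbf{x}_k$ of $\mathbf{x}_k'=\mathbf{f}_k(\mathbf{x}_k)$, $\mathbf{x}_k(0)=\mathbf{x}_0$, on $[0,T]$ (equivalently, one could use Euler polygonal arcs built from measurable selections of $\F$). The family $\{\mathbf{x}_k\}$ is uniformly bounded and equi-Lipschitz with constant $M$, so Arzel\`a--Ascoli gives a subsequence converging uniformly to some Lipschitz $\mathbf{x}$ with $\mathbf{x}(0)=\mathbf{x}_0$, while the derivatives $\mathbf{x}_k'$, bounded in $L^\infty$, converge weak-$*$ to $\mathbf{x}'$.

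The decisive step---and the main obstacle---is the closure argument establishing $\mathbf{x}'(t)\in\F(\mathbf{x}(t))$ for a.e.\ $t$. Here I would invoke Mazur's lemma to upgrade the weak-$*$ convergence of $\mathbf{x}_k'$ to strong $L^1$ convergence of suitable convex combinations, and then argue pointwise a.e.: for each $\delta>0$, upper-semicontinuity and the approximate-selection property place $\mathbf{x}_k'(t)$ in the convex set $\overline{\operatorname{co}}\bigcup_{|\mathbf{y}-\mathbf{x}(t)|<\delta}\F(\mathbf{y})$ for all large $k$, so the strong limit $\mathbf{x}'(t)$ lies there as well; letting $\delta\downarrow0$ and using that $\F$ is upper-semicontinuous with closed convex values collapses this intersection down to $\F(\mathbf{x}(t))$ itself. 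Convexity is indispensable exactly at this point, since weak limits of derivatives generally escape the pointwise values of a nonconvex-valued map. This completes the verification that $\mathbf{x}$ is a Carath\'eodory solution of the inclusion on $[0,T]$.
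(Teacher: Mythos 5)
You should know at the outset that the paper itself offers no proof of this theorem: it is stated as a classical, imported result (``A typical existence theorem is the following. We just state the autonomous version\dots''), with the reader pointed to Filippov's monograph and Cort\'es's tutorial. So there is nothing internal to compare against, and the question is only whether your blind reconstruction of the classical argument is sound --- and it is. Your three steps are exactly the standard proof (as in Aubin--Cellina's convergence theorem): (i) local boundedness of an upper-semicontinuous, compact-valued map justifies the localization $\F(\mathbf{x})\subseteq \overline{B}(\bcero,M)$ near $\mathbf{x}_0$ and the choice $MT\le r$; (ii) Cellina's approximate selection theorem (which indeed requires precisely the convexity of values you have) plus Peano, or equivalently Euler polygons, produces the equi-Lipschitz approximants; (iii) Arzel\`a--Ascoli plus Mazur's lemma gives a.e.\ pointwise convergence of convex combinations of the $\mathbf{x}_k'$, and the identity $\bigcap_{\delta>0}\overline{\operatorname{co}}\bigcup_{|\mathbf{y}-\mathbf{x}(t)|<\delta}\F(\mathbf{y})=\F(\mathbf{x}(t))$, valid exactly because $\F$ is upper-semicontinuous with compact convex values, closes the argument. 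Two small points you could tighten: the containment of $\mathbf{x}_k'(t)$ in $\operatorname{co}\bigcup_{|\mathbf{y}-\mathbf{x}(t)|<\delta}\F(\mathbf{y})$ holds exactly if you use Cellina's partition-of-unity construction (values are convex combinations of values of $\F$ at nearby points); with a mere graph-approximation statement you only get containment up to an $\varepsilon_k$-enlargement, which still suffices since $\varepsilon_k\downarrow 0$. Also, the uniform bound $M$ on the values of $\mathbf{f}_k$ requires the approximating balls $B(\mathbf{x},\varepsilon_k)$ to stay inside the localization ball, so either shrink $r$ slightly or enlarge $M$; this is cosmetic. Your identification of convexity as the indispensable ingredient in the limit passage is exactly right.
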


Suppose we begin with a discontinuous dynamical system
\begin{equation}\label{disc}
\mathbf{x}'=F(\mathbf{x}),\quad F:\R^N\to\R^N,
\end{equation}
with $F$ not continuous, but has certain manifolds of discontinuity of various dimensions in such a way that surfaces of lower dimension are intersections of surfaces of higher dimension in a typical hierarchical fashion. Though more complicated situations can be considered, we will restrict attention to this kind of piecewise continuous vector fields. These actually cover many situations of practical interest. It is in fact the one treated explicitly in Theorem 4, pag. 115, of \cite{filippov}. 
One then defines the Filippov extension of $F$ to be the set-valued map $\F:\R^N\mapsto 2^{\R^N}$ given by the rule:
\begin{enumerate}
\item $\F(\mathbf{x})=\{F(\mathbf{x})\}$ if $F$ is continuous at $\mathbf{x}$;
\item $\F(\mathbf{x})$ is the convex hull of all possible accumulation points of $F(\mathbf{y})$ as $\mathbf{y}\to \mathbf{x}$.
\end{enumerate}
One main situation of interest takes place when $F$ is piecewise continuous. By this we mean that $F$ is continuous except for a closed set 
made up of a union of surface discontinuities. Some of these surfaces may intersect each other in lower dimensional manifolds. 
At each point of discontinuity, the possible accumulation points is a finite set of vectors so that the above existence Theorem \ref{exis} can be applied to obtain integral curves for the Filippov differential inclusion for $\F$. 

We are now concerned about uniqueness. For this we further assume that the surfaces of discontinuity of $F$ are smooth and non-singular, so that we can talk about their tangent planes at all of their points. In this situation we can be more precise about Filippov solutions of (\ref{disc}). We would like to make a particular selection $\tilde F$ of $\F$ for each point $\mathbf{x}$ of discontinuity of $F$, according to the following rule. 
\begin{quotation}
For each point $\mathbf{x}$ of discontinuity of $F$ that belongs to a certain smooth surface $S$ of discontinuity of the least possible dimension, we take $\tilde F(\mathbf{x})$ as the intersection of the Filippov extension $\F(\mathbf{x})$ with the tangent plane of $S$ at $\mathbf{x}$. 
\end{quotation}
Note that $\tilde F(\mathbf{x})$ could be empty or set-valued itself. Filippov solutions for (\ref{disc}) are then Carath\'eodory solutions for 
$$
\mathbf{x}'\in\tilde F(\mathbf{x}).
$$
In particular, the set of times for which $\tilde F(\mathbf{x})$ is empty has to be of null measure for solutions. 
Theorem 4, pag. 115 in \cite{filippov} is a uniqueness result for this kind of solutions. However, it holds under rather restrictive and technical hypotheses which are too rigid for our purposes. This comment is also valid for Proposition 5, pag. 53 of \cite{cortes}. 
Indeed, this uniqueness is somehow associated with stability in the sense that unstable equilibria admit several integral curves emanating from them. To avoid these difficulties, we  do not place ourselves in that context but, to make things simpler, admit that eventually we may have several integral curves through some starting points. 

Finally, (Filippov) equilibria for (\ref{disc}) are points $\mathbf{x}$ such that $\mathbf{0}\in\tilde F(\mathbf{x})$, and (locally) stable equilibria are such points $\mathbf{x}$, with $\mathbf{0}\in\tilde F(\mathbf{x})$, so that integral curves starting on neighboring points converge to $\mathbf{x}$, even though we may  have several integral curves starting from the same vector. 

%================
\subsection{The case of a single inequality constraint}
%================
We retake problem \eqref{probsen} and conditions \eqref{optsen}. Suppose both $f$ and $g$ are smooth, and $\nabla g$ never vanishes in a vicinity of the interface $g=0$ relative to $g\ge0$. When a certain field $\Psi$ is discontinuous through an interface $g=0$, we systematically assume that over $g=0$ is defined as its Filippov's extension. Recall that $\Psi\cdot\nabla g=0$ over $g=0$ by definition whenever the Filippov's extension has a non-empty intersection with the tangent plane. Otherwise, Fillipov's extension is empty. 

\begin{lemma}\label{segundo}
Suppose $\overline\bx$ is an isolated local solution of the optimization problem \eqref{probsen} and of \eqref{optsen} 
under the indicated hypotheses on $f$ and $g$. Let $\Psi:\R^N\to\R^N$ be a, possibly discontinuous, field through $g=0$, defined to be its Filippov's extension on $g=0$ such that:
\begin{enumerate}
\item $\Psi\cdot\nabla g<0$ in a neighbourhood of $g=0$ relative to $g\ge0$;
\item $\Psi\cdot\nabla f\le0$ if $g\le0$;
\item $\Psi$ is non-empty in a vicinity of $\overline\bx$, and $\Psi=\bcero$ and $g=0$ only happen when $\nabla f$ and $\nabla g$ are parallel.
\end{enumerate}
Then integral curves of the dynamical system $\bx'=\Psi(\bx)$ starting sufficiently close to $\overline\bx$ will converge to $\overline\bx$ as in Lemma \ref{primero}. 
\end{lemma}
\begin{proof}
If $g(\overline\bx)<0$, everything takes place in an open subset away from the manifold $g=0$, and the situation is similar to the classic one: $\nabla g$ does not play any role. 

If, on the contrary, $g(\overline\bx)=0$, the proof is similar to that of Lemma \ref{primero}. We use exactly the same idea. Seeking a contradiction, suppose there is some $r_0>0$, and a sequence $\cC_j$ of non-empty (pieces of) integral curves of the dynamical system associated with $\Psi$,  contained in the ball $\bbB(\overline\bx, r_0)$, and starting at $\bx_j$, such that 
$$
\bx_j\to\overline\bx,\quad \cC_j\cap\partial\bbB(\overline\bx, r_0)\neq\emptyset,\quad \cC_j\hbox{ moving away from }\overline\bx.
$$
Because of the  fact that $\Psi$ is tangent to the manifold $g=0$, $\cC_j$ cannot cross through $g=0$, and so the sign of $g$ can be assumed to be the same for all $\cC_j$. 
\begin{enumerate}
\item If such sign is positive $g(\cC_j)>0$, because of the first condition on $\Psi$ related to $\nabla g$, the limit set of $\cC_j$ must be contained in $g=0$.
\item If that sign is non-positive $g(\cC_j)\le0$, by the condition on $\Psi$ relative to $\nabla f$, and bearing in mind that do not have nearby a critical point of $f$ (case $g$ inactive already treated), the limit set of $\cC_j$ must be contained in $g=0$ as well.
\end{enumerate}
In any case, by continuity, as $j\to\infty$, there must be (a piece of) an integral curve contained in the manifold $g=0$ joining $\overline\bx$ with some point in $\partial\bbB(\overline\bx, r_0)$. But this is impossible, because $f$ is a Lyapunov function for $\Psi$ over $g=0$, and there is a unique equilibrium point of $\Psi$ over $g=0$ in the ball $\bbB(\overline\bx, r_0)$ if $r_0$ is sufficiently small, which coincides with $\overline\bx$ by the optimality conditions \eqref{optsen}.
\end{proof}

Our proposal in \eqref{propuesta} complies with all the requirements in the lemma. 
\begin{proposition}
Let $f$ and $g$ be smooth functions such that $\nabla g$ never vanishes in a neighbourhood of $g=0$. Then locally, asymptotically stable equilibria of the (Filippov's extension of the) discontinuous vector field $\Psi$ in \eqref{propuesta}  are exactly the local solutions of \eqref{probsen} or of \eqref{optsen}.
\end{proposition}
\begin{proof}
We first check that the discontinuous vector field $\Psi$ in \eqref{propuesta} verifies all of the requirements in the lemma in a neighbourhood of a local solution $\overline\bx$ of \eqref{optsen}. The first two are straightforward. Concerning the third, notice that at $\overline\bx$, as compared with feasible points $\bx$ with $g(\bx)\le0$, $g$ attains a maximum, while $f$ attains a minimum. In this way, $\nabla f\cdot\nabla g\le0$ in a neighbourhood of $\overline\bx$. This condition suffices to have that $\Psi$ can indeed be extended at points of discontinuity in such a way that it belongs to the tangent space. Clearly, such extension vanishes precisely when $\nabla f$ and $\nabla g$ are parallel. 
\end{proof}

%================
\section{The case with a single inequality constraint and several equality restrictions}
%================
Let us apply the previous ideas to the mathematical program
\begin{equation}\label{probgen00}
\hbox{Minimize in }\mathbf{x}\in\R^N:\quad f(\mathbf{x})\quad \hbox{subject to}\quad g(\mathbf{x})\le\bcero, \mathbf{h}(\mathbf{x})=\bcero,
\end{equation}
where $f:\R^N\to\R$, $g:\R^N\to\R$, $\mathbf{h}:\R^N\to\R^n$, are smooth.
Recall that we will be using the notation
$$
\mathbf{H}(\mathbf{x})\equiv\nabla \mathbf{h}(\mathbf{x})^T(\nabla \mathbf{h}(\mathbf{x})\nabla \mathbf{h}(\mathbf{x})^T)^{-1}\nabla \mathbf{h}(\mathbf{x})
$$
for the projection matrix onto the subspace generated by $\{\nabla \mathbf{h}(\mathbf{x})\}$. Let us designate by $\H(\mathbf{x})$ such subspace. 
For simplicity, some times we will drop the particular vector $\mathbf{x}$ where Jacobian matrices are supposed to be evaluated $\mathbf{H}=\mathbf{H}(\mathbf{x})$, $\H=\H(\mathbf{x})$, and so on. 
The matrix $\id-\mathbf{H}$ is the projection onto the orthogonal complement of $\H$. As such, note that 
$$
\mathbf{H}^2=\mathbf{H},\quad (\id-\mathbf{H})^2=\id-\mathbf{H},\quad (\id-\mathbf{H})\mathbf{H}=\mathbf{H}(\id-\mathbf{H})=\bcero.
$$
They both are symmetric matrices. 
The vector field we would like to consider is
$$
\Psi(\mathbf{x})=\begin{cases}-\mathbf{h}(\mathbf{x})\nabla \mathbf{h}(\mathbf{x})-\left(\id-\mathbf{H}(\mathbf{x})\right)\nabla g(\mathbf{x})^T,& g(\mathbf{x})>0,\\
-\mathbf{h}(\mathbf{x})\nabla \mathbf{h}(\mathbf{x})-\left(\id-\mathbf{H}(\mathbf{x})\right)\nabla f(\mathbf{x})^T,& g(\mathbf{x})\le0,
\end{cases}
$$
that underlies the mathematical program (\ref{probgen00}). For notational convenience, denote $\Psi^+(\mathbf{x})$ the limit value of $\Psi(\mathbf{x})$ as $\mathbf{x}$ tends to $g=0$ being positive, and similarly for $\Psi^-(\mathbf{x})$. 
Assuming that all functions involved here are continuously differentiable, and that the matrix $\nabla \mathbf{h}(\mathbf{x})\nabla \mathbf{h}(\mathbf{x})^T$ is always regular, it is clear, and has already been indicated, that the hypersurface of discontinuity for $\Psi$ is $g=0$. Even though $\Psi$ is defined precisely for $g=0$, these values are irrelevant and what matters is the limit value of $\Psi$ on the right, and on the left. We would like to prove a result similar to previous propositions.

\begin{theorem}\label{unades}
Let $f:\R^N\to\R$, $\mathbf{h}:\R^N\to\R^n$, $g:\R^N\to\R$ be continuously differentiable maps such that
\begin{itemize}
\item $\nabla \mathbf{h}(\mathbf{x})\nabla \mathbf{h}(\mathbf{x})^T$ is an invertible $n\times n$-matrix for all $\mathbf{x}$ in a neighbourhood of the manifold $\bh=\bcero$;
\item $\nabla g(\mathbf{x})$ does never belong to $\H(\mathbf{x})$ when $g(\mathbf{x})=0$.
\end{itemize}
Suppose, in addition, that the 
solutions of the Karush-Kuhn-Tucker optimality conditions
\begin{equation}\label{KKT}
\nabla f(\mathbf{x})+\mu\nabla g(\mathbf{x})+\lambda\nabla \mathbf{h}(\mathbf{x})=\bcero,\quad \mu\ge\bcero, \mu g(\mathbf{x})=0,
\end{equation} 
are isolated. 
Then, asymptotically stable equilibria for $\Psi$ (in the generalized sense of Filippov) are exactly the local solutions (minima) of the program (\ref{probgen00}). In particular, those equilibria $\mathbf{x}$ are
feasible points, $g(\mathbf{x})\le\bcero$, $\mathbf{h}(\mathbf{x})=\bcero$, which are solutions of (\ref{KKT}). 
\end{theorem}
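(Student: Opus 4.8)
The plan is to mirror the Lyapunov argument of Propositions \ref{igualdad} and \ref{igualdadd}, adding a sliding-mode analysis along the discontinuity surface $S=\{g=0\}$, which the constraint qualification $\nabla g\notin\H$ guarantees to be smooth and non-singular, so that Filippov's extension and the existence Theorem \ref{exis} both apply. First I would reduce the problem to the manifold $\{\mathbf{h}=\bcero\}$: since $-\mathbf{h}\nabla\mathbf{h}\in\H$ while the projected terms lie in $\H^\perp$, a direct computation gives $\langle\nabla(|\mathbf{h}|^2),\Psi\rangle=-2|\mathbf{h}\nabla\mathbf{h}|^2\le0$ on both sides $g>0$ and $g\le0$, and the same bound survives passing to convex combinations of the one-sided limits. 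Hence $|\mathbf{h}|^2$ is a global Lyapunov function whose only flat directions lie in $\{\mathbf{h}=\bcero\}$, so every $\omega$-limit set sits on $\{\mathbf{h}=\bcero\}$ and the rest of the analysis can be carried out there.

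On $\{\mathbf{h}=\bcero\}$ I would treat the two open regions and the interface separately. In $\{g<0\}$ the field reduces to $-(\id-\mathbf{H})\nabla f^T$, so $\langle\nabla f,\Psi\rangle=-|(\id-\mathbf{H})\nabla f^T|^2\le0$ and the flow descends $f$ until $\nabla f\in\H$, a KKT point with $\mu=0$; in $\{g>0\}$ the field reduces to $-(\id-\mathbf{H})\nabla g^T$, so $\langle\nabla g,\Psi\rangle=-|(\id-\mathbf{H})\nabla g^T|^2\le0$ and, the projected gradient being nonzero near $S$ by the qualification, the flow strictly decreases $g$ and drives trajectories onto $S$. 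Writing $p=(\id-\mathbf{H})\nabla f^T$ and $q=(\id-\mathbf{H})\nabla g^T$, both in $\H^\perp$, the two one-sided limits are $\Psi^-=-p$ and $\Psi^+=-q$, and the sliding condition follows from $\langle\nabla g,\Psi^+\rangle=-|q|^2<0$ together with $\langle\nabla g,\Psi^-\rangle=-\langle p,q\rangle$. Thus the attractive sliding region is exactly $\{\langle p,q\rangle\le0\}$, while $\langle p,q\rangle>0$ yields transversal crossing into the feasible interior.

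The crucial step, and the main obstacle, is the identification of the sliding equilibria with KKT points carrying the correct sign of the multiplier. The tangential Filippov selection is the unique convex combination $\tilde\Psi=-[(1-s)p+sq]$ with $\langle\nabla g,\tilde\Psi\rangle=0$; solving gives $s=\alpha/(\alpha+\beta)$ with $\alpha=-\langle p,q\rangle\ge0$ and $\beta=|q|^2>0$, so that the requirement $s\in[0,1]$ encodes precisely the sliding region. Then $\bcero\in\tilde F$ forces $(1-s)p+sq=\bcero$, i.e. $p=-\kappa q$ with $\kappa=s/(1-s)\ge0$; translating this back, $(\id-\mathbf{H})(\nabla f^T+\kappa\nabla g^T)=\bcero$ means $\nabla f+\kappa\nabla g+\lambda\nabla\mathbf{h}=\bcero$ for some $\lambda$, with $\kappa\ge0$ and $g=0$. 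This is exactly (\ref{KKT}) with an active constraint, and the sign $\mu=\kappa\ge0$ emerges from nothing more than the demand that the Filippov velocity be a genuine convex combination of the one-sided fields. I expect checking that this selection is well-defined, single-valued and tangent to $S\cap\{\mathbf{h}=\bcero\}$ (using $\beta>0$ from the qualification to rule out $s=1$) to be the most delicate bookkeeping.

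Finally, for the stability dichotomy I would use $f$ itself as a Lyapunov function on the whole feasible region. The case $\{g<0\}$ is already settled; on the sliding surface a short Cauchy--Schwarz computation gives $\langle\nabla f,\tilde\Psi\rangle=(\alpha^2-\beta|p|^2)/(\alpha+\beta)\le0$, with equality exactly at the sliding KKT points found above. Hence $f$ strictly decreases along every feasible trajectory until such a point is reached, which identifies the asymptotically stable equilibria with local minimizers of $f$ over the feasible set. It then remains to discard the spurious zeros of $\Psi$ in $\{g>0\}$, where $\nabla g\in\H$: since $g$ strictly decreases there, trajectories leave any neighborhood toward $S$, so these equilibria cannot be asymptotically stable. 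Isolation of the KKT solutions together with continuity lets me transfer the conclusions from $\{\mathbf{h}=\bcero\}$ to a full neighborhood, exactly as in the equality case.
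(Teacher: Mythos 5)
Your proposal is correct and follows essentially the same route as the paper's proof: the same hierarchy of Lyapunov functions ($|\mathbf{h}|^2$ to reach $\{\mathbf{h}=\bcero\}$, then $g$ in the infeasible region, then $f$ on the sliding surface), the same tangential Filippov selection on $\{g=0\}$ whose convex-combination requirement is what produces the multiplier sign $\mu\ge0$, and the same Cauchy--Schwarz computation showing $\langle\nabla f,\tilde\Psi\rangle\le0$ at sliding points. Your explicit coefficient $s=\alpha/(\alpha+\beta)$ coincides with the paper's $r$, and your characterization of the attractive sliding region $\{\langle p,q\rangle\le 0\}$ versus transversal crossing is a small correct refinement that the paper leaves implicit.
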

\begin{proof}
Assume first that we have a certain admissible vector $\mathbf{x}\in\R^N$ which is a solution of (\ref{KKT}) with $\mu>0$. Then $g(\mathbf{x})=0$, $\mathbf{h}(\mathbf{x})=\bcero$, and $\nabla f(\mathbf{x})+\mu\nabla g(\mathbf{x})+\lambda\nabla \mathbf{h}(\mathbf{x})=\bcero$, for a certain $\lambda\in\R^m$. Because this point lies precisely at the surface of discontinuity for $\Psi$, its Filippov extension on the hypersurface $g=0$ is taken to be the intersection of the segment $[\Psi^+(\mathbf{x}), \Psi^-(\mathbf{x})]$ with the tangent plane of $g=0$ at that vector $\mathbf{x}$. The vectors $\Psi^{\pm}(\mathbf{x})$ are the two-sided limits of $\Psi$ at $\mathbf{x}$  for $g>0$, and $g<0$, respectively. Notice that
 $\mathbf{h}(\mathbf{x})=\bcero$ implies that
$$
\Psi^+(\mathbf{x})=-(\id-\mathbf{H}(\mathbf{x}))\nabla g(\mathbf{x})^T,\quad \Psi^-(\mathbf{x})=-(\id-\mathbf{H}(\mathbf{x}))\nabla f(\mathbf{x})^T,
$$
and, applying $(\id-\mathbf{H}(\mathbf{x}))$ to (\ref{KKT}), we obtain
\begin{equation}\label{convex}
\frac\mu{1+\mu}\Psi^+(\mathbf{x})+\frac1{1+\mu}\Psi^-(\mathbf{x})=\bcero.
\end{equation}
This identity implies that the null vector is a convex combination of $\Psi^+(\mathbf{x})$, and $\Psi^-(\mathbf{x})$.
Due to the fact that $\nabla g(\mathbf{x})$ does not belong to $\H(\mathbf{x})$, the vector $\Psi^+(\mathbf{x})$ cannot belong to the tangent space of $g=0$ at $\mathbf{x}$, and so the line through $\Psi^+(\mathbf{x})$ and $\Psi^-(\mathbf{x})$ can only intersect once that tangent plane. But (\ref{convex}) implies that such unique intersection has to be the vanishing vector, and so $\mathbf{x}$ is an equilibrium point for $\Psi$ (in the sense of Filippov). 

The situation with $g(\mathbf{x})<0$ corresponds to the case where the constraint $g\le 0$ is inactive, and it leads to the previous framework without inequality constraints. 

Conversely, suppose that $\mathbf{x}$ is  a vector for which the Filippov extension of $\Psi$ vanishes. We treat three different situations.
\begin{enumerate}
\item $g(\mathbf{x})>0$. In this situation, equilibria would not be stable. See below the discussion on stability. 
\item $g(\mathbf{x})<0$. This case, as indicated above, would lead to a solution of (\ref{KKT}) where the inequality is not active. We refer to the ideas discussed earlier for the situation with no inequality constraints.
\item $g(\mathbf{x})=0$. Such a vector $\mathbf{x}$ is a root of the Filippov extension to the surface of discontinuity $g=0$. As such, the null vector belongs to the convex hull of $\{\Psi^+(\mathbf{x}), \Psi^-(\mathbf{x})\}$, i. e.
$$
r\Psi^+(\mathbf{x})+(1-r)\Psi^-(\mathbf{x})=\bcero, \quad r\in[0, 1].
$$
It is interesting to point out that $r$ cannot be $1$ because $\nabla g(\mathbf{x})$ cannot belong to the subspace $\H(\mathbf{x})$. Hence we can write this identity in the form
\begin{equation}\label{combconv}
\frac r{1-r}\Psi^+(\mathbf{x})+\Psi^-(\mathbf{x})=\bcero, \quad r\in[0, 1).
\end{equation}
This identity leads immediately to (\ref{KKT}) with $\mu=r/(1-r)\ge0$. It only remains to argue that necessarily $\mathbf{h}(\mathbf{x})=0$. To this end, observe that once we know that $\nabla f(\mathbf{x})+\mu\nabla g(\mathbf{x})$ belongs to $\H(\mathbf{x})$, then $(\id-\mathbf{H}(\mathbf{x}))(\nabla f(\mathbf{x})^T+\mu\nabla g(\mathbf{x})^T)=\bcero$, and 
$$
0=\frac r{1-r}\Psi^+(\mathbf{x})+\Psi^-(\mathbf{x})=-\frac r{1-r} \mathbf{h}(\mathbf{x})\nabla \mathbf{h}(\mathbf{x})-\mathbf{h}(\mathbf{x})\nabla \mathbf{h}(\mathbf{x}),
$$
because the other contributions drop off. Since the gradients of the components of $h$ are linearly independent, we must have $\mathbf{h}(\mathbf{x})=\bcero$.  
\end{enumerate}

Let us now turn to the important stability issue. As before, $g(\mathbf{x})<0$ takes us back to the situation with no inequality constraint that has already been examined. 
Assume first that $g(\mathbf{x})>0$. 
We would have that
$$
\mathbf{h}(\mathbf{x})\nabla \mathbf{h}(\mathbf{x})+\left(\id-\mathbf{H}(\mathbf{x})\right)\nabla g(\mathbf{x})^T=\bcero.
$$
But in a certain neighborhood of such $\mathbf{x}$, the field $\Psi$ is given by 
$$
\Psi=-\mathbf{h}\nabla \mathbf{h}-\left(\id-\mathbf{H}\right)\nabla g^T, 
$$
so that $\langle\Psi, \nabla(|\mathbf{h}|^2)\rangle\le0$. In this way, integral curves for $\Psi$ will decrease $|\mathbf{h}|^2$, and hence if $\mathbf{h}(\mathbf{x})$ is not zero the equilibrium would not be stable. Over the manifold $\mathbf{h}=0$, $\Psi=-\left(\id-\mathbf{H}\right)\nabla g^T$, and this implies that $g$ should decrease along this manifold. Hence if $g(\mathbf{x})>0$, the equilibrium point $\mathbf{x}$ cannot be stable. 

Suppose instead that (\ref{combconv}) holds, and, in addition, $\mathbf{h}(\mathbf{x})=\bcero$. It is clear that
\begin{equation}\label{semilya}
\langle\Psi^+(\mathbf{x}), \nabla g(\mathbf{x})\rangle=-\left|(\id-\mathbf{H}(\mathbf{x}))\nabla g(\mathbf{x})^T\right|^2<0
\end{equation}
because $\mathbf{h}(\mathbf{x})=\bcero$, $\nabla g(\mathbf{x})$ does not belong to $\H(\mathbf{x})$, and due to the fact that 
$$
\langle(\id-\mathbf{H}(\mathbf{x}))\nabla g(\mathbf{x})^T, \mathbf{H}(\mathbf{x})\nabla g(\mathbf{x})^T\rangle=0.
$$
Condition (\ref{semilya}) implies that $g$ acts like a Lyapunov function for $\Psi$ when $g>0$ because integral curves for $\Psi^+$ will converge to the hypersurface $g=0$. Geometrically, (\ref{semilya}) means that $\Psi^+(\mathbf{x})$ points towards the region $g<0$. But then condition (\ref{combconv}) implies that $\Psi^-(\mathbf{x})$ points towards the region $g>0$, and so integral curves for $\Psi^-$ near $\mathbf{x}$ will converge to the hypersurface $g=0$ as well. On the other hand, it is also straightforward to check that
$$
\langle\Psi^{\pm}, \nabla |\mathbf{h}(\mathbf{y})|^2\rangle =-2|\mathbf{h}(\mathbf{y})|^2|\nabla \mathbf{h}(\mathbf{y})|^2
$$
for all $\mathbf{y}$, and so integral curves at both sides of the singular hypersurface converge to $\mathbf{h}=\bcero$ as well. We can, therefore, conclude that integral curves for $\Psi$ will accumulate on the surface $g=0$, $\mathbf{h}=\bcero$. Let us finally check that on this surface, when the Filippov extension of $\Psi$ holds so that the segment with end-points $\Psi^+$ and $\Psi^-$ intersects the tangent hyperplane to the hypersurface $g=0$, $f$ is a Lyapunov function. Indeed, suppose that ($\mathbf{h}=\bcero$)
$$
\langle r(\id-\mathbf{H})\nabla g^T+(1-r)(\id-\mathbf{H})\nabla f, \nabla g^T\rangle=0
$$
for some $r\in[0, 1)$. We find directly the expressions
$$
r=\frac{-\langle (\id-\mathbf{H})\nabla f^T, \nabla g\rangle}{\langle (\id-\mathbf{H})(\nabla g^T-\nabla f^T), \nabla g\rangle},\quad 
1-r=\frac{\langle(\id-\mathbf{H})\nabla g^T, \nabla g\rangle}{\langle (\id-\mathbf{H})(\nabla g^T-\nabla f^T), \nabla g\rangle}.
$$
The condition $1-r\in[0, 1]$ implies that the denominator in these two fractions ought to be strictly positive. But then, after some careful arithmetic, and bearing in mind that $\mathbf{H}^2=\mathbf{H}$, $(\id-\mathbf{H})^2=(\id-\mathbf{H})$, $\mathbf{H}^T=\mathbf{H}$, $(\id-\mathbf{H})^T=(\id-\mathbf{H})$, we find that $\langle r(\id-\mathbf{H})\nabla g^T+(1-r)(\id-\mathbf{H})\nabla f^T, \nabla f\rangle$ is given by
$$
\frac{\left(|(\id-\mathbf{H})\nabla f^T|^2|(\id-\mathbf{H})\nabla g^T|^2-
(\langle(\id-\mathbf{H})\nabla f^T, (\id-\mathbf{H})\nabla g^T\rangle)^2\right)}{\langle (\id-\mathbf{H})(\nabla g^T-\nabla f^T), \nabla g\rangle}\ge0.
$$
Hence we have that $\langle\Psi, \nabla f\rangle\le0$, as desired. 
\end{proof}

%================
\section{The full general situation}
%================
From now on, we focus on a general mathematical program involving both equality and inequality constraints
\begin{equation}\label{probgenn}
\hbox{Minimize in }\mathbf{x}\in\R^N:\quad f(\mathbf{x})\quad \hbox{subject to}\quad \mathbf{g}(\mathbf{x})\le\bcero, \mathbf{h}(\mathbf{x})=\bcero,
\end{equation}
where $f:\R^N\to\R$, $\mathbf{g}:\R^N\to\R^m$, $\mathbf{h}:\R^N\to\R^n$, are continuous differentiable. The situation is considerably more complex. Let us recall the well-known Karush-Kuhn-Tucker optimality conditions that local solutions of this optimization problem must verify (under suitable constraint qualifications), and that we would like to reproduce for equilibria of a suitable vector field. Those are
$$
\nabla f(\mathbf{x})+\mu\nabla \mathbf{g}(\mathbf{x})+\lambda\nabla \mathbf{h}(\mathbf{x})=\bcero,\quad \mu\ge\bcero, \mu \mathbf{g}(\mathbf{x})=0.
$$
Bear in mind that $\nabla f$ is a $1\times N$-matrix (a vector in $\R^N$), while $\nabla \mathbf{g}$ and $\nabla \mathbf{h}$ are $m\times N$-, and $n\times N$ matrices, respectively; $\mu\in\R^m$, $\lambda\in\R^n$. In addition, the point $\mathbf{x}$ must, of course, be feasible $\mathbf{g}(\mathbf{x})\le\bcero$, $\mathbf{h}(\mathbf{x})=\bcero$. 

As we have seen in the preceding section, it is not possible to define a corresponding continuous vector field as in the equality-constrained problem. However, it is possible to come up with a discontinuous, but piecewise continuous, vector field which has the required properties. Our proposal for the full general situation is the following. Put 
$$
G(\mathbf{x})=\max{g_i(\mathbf{x})},\quad \I(\mathbf{x})=\{i: g_i(\mathbf{x})=G(\mathbf{x})\},\quad \G(\mathbf{x})=\sum_{i\in\I(\mathbf{x})}\nabla g_i(\mathbf{x}).
$$
We will take  
$$
\Psi(\mathbf{x})=\begin{cases}-\mathbf{h}(\mathbf{x})\nabla \mathbf{h}(\mathbf{x})-\left(\id-\mathbf{H}(\mathbf{x})\right)\G(\mathbf{x})^T,& G(\mathbf{x})>0,\\
-\mathbf{h}(\mathbf{x})\nabla \mathbf{h}(\mathbf{x})-\left(\id-\mathbf{H}(\mathbf{x})\right)\nabla f(\mathbf{x})^T,& G(\mathbf{x})\le0.
\end{cases}
$$
If is clear that this $\Psi$ is not continuous, although it is piecewise-continuous. Among the manifolds of discontinuity we find the ones determined when some of the inequality constraints become active. 

We would like to prove a result similar to Theorem \ref{unades} but for the general situation when we have various inequality constraints. The discussion in the proof is similar to that of the previous theorem but more involved. In particular, we need to have a clear understanding of the Filippov extension of $\Psi$ to the surfaces of discontinuity. Note that in this situation we have surfaces of discontinuity of all dimensions $N-1$, $N-2$, \dots, $1$, $0$. 

There are two different kinds of points where the field $\Psi$ is discontinuous: either $G(\mathbf{x})>0$ but at least two of the components of $g(\mathbf{x})$ have the same value than $G(\mathbf{x})$; or else, $G(\mathbf{x})=0$. In the region $G(\mathbf{x})<0$, $\Psi$ is continuous. Let us try to understand the Filippov extension of $\Psi$ in those two situations.
\begin{enumerate}
\item $G(\mathbf{x})>0$, and the  cardinal of the set $\I(\mathbf{x})$ is at least two. It is elementary to check that 
the Filippov extension of $\Psi(\mathbf{x})$ is
$$
-\mathbf{h}(\mathbf{x})\nabla \mathbf{h}(\mathbf{x})-\hbox{co}(\{(\id-\mathbf{H}(\mathbf{x}))\nabla g_i(\mathbf{x})^T: i\in\I(\mathbf{x})\}),
$$
where co indicates the convex hull of the set. Suppose that the null vector belongs to this extension so that
$$
\mathbf{0}=-\mathbf{h}(\mathbf{x})\nabla \mathbf{h}(\mathbf{x})-\sum_{i\in\I(\mathbf{x})} \alpha_i(\id-\mathbf{H}(\mathbf{x}))\nabla g_i(\mathbf{x}).
$$
Because the first term belongs to $\H(\mathbf{x})$ but the second one lies in its orthogonal complement, both terms must vanish, and so $\mathbf{h}(\mathbf{x})=0$, and
$$
\sum_{i\in\I(\mathbf{x})} \alpha_i(\id-\mathbf{H}(\mathbf{x}))\nabla g_i(\mathbf{x})^T=\bcero.
$$
We would like these equilibria to be unstable. 
\item $G(\mathbf{x})=0$. In this case the Filippov extension is the convex hull of the set that incorporates, in addition to $\{(-\mathbf{h}(\mathbf{x})\nabla \mathbf{h}(\mathbf{x})-(\id-\mathbf{H}(\mathbf{x}))\nabla g_i(\mathbf{x})^T: i\in \I(\mathbf{x})\}$, the vector $-\mathbf{h}(\mathbf{x})\nabla \mathbf{h}(\mathbf{x})-(\id-\mathbf{H}(\mathbf{x}))\nabla f(\mathbf{x})^T$. We would like equilibria in this range of vectors to be stable. 
\end{enumerate}

We restate our most general result which is Theorem \ref{variasdes} in the Introduction.

\begin{theorem}\label{variasdess}
Let $f:\R^N\to\R$, $\mathbf{h}:\R^N\to\R^n$, $\mathbf{g}:\R^N\to\R^m$ be continuously differentiable maps such that
\begin{itemize}
\item $\nabla \mathbf{h}(\mathbf{x})\nabla \mathbf{h}(\mathbf{x})^T$ is an invertible $n\times n$-matrix for all $\mathbf{x}\in\R^N$ in a neighbourhood of the set $\bh=\bcero$;
\item for each $\mathbf{x}$ where $G(\mathbf{x})=0$, the intersection of the subspaces $\H(\mathbf{x})$ and the one spanned by $\{\nabla g_i(\mathbf{x}): i\in\I(\mathbf{x})\}$ is the trivial subspace;
\item the null vector is never in the convex hull of the set $\{\nabla g_i(\mathbf{x}): i\in\I(\mathbf{x})\}$ when $G(\mathbf{x})=0$;
\item solutions of the corresponding Karush-Kuhn-Tucker optimality conditions
\begin{equation}\label{KKTdd}
\nabla f(\mathbf{x})+\mu\nabla \mathbf{g}(\mathbf{x})+\lambda\nabla \mathbf{h}(\mathbf{x})=\bcero,\quad \mu\ge\bcero, \mu \mathbf{g}(\mathbf{x})=0,
\end{equation}
complying with $\mathbf{g}(\mathbf{x})\le\bcero$, $\mathbf{h}(\mathbf{x})=\bcero$, are isolated.
\end{itemize}
Then,  asymptotically stable equilibria for $\Psi$ (in the generalized sense of Filippov) are exactly the feasible points $\mathbf{x}$, $\mathbf{g}(\mathbf{x})\le\bcero$, $\mathbf{h}(\mathbf{x})=\bcero$, which are solutions of (\ref{KKTdd}), and correspond to a local optimal solution (minima) of  (\ref{probgenn}). 
\end{theorem}
\begin{proof}
By the discussion prior to the statement of this result, 
it is clear that we would like to have stable equilibria when $G(\mathbf{x})=0$, and unstable equilibria for $G(\mathbf{x})>0$. The situation $G(\mathbf{x})<0$ takes us back to the equality-constraint case. 

Just as in the proof of Theorem \ref{unades}, we proceed in three steps. 

1. Assume $\mathbf{x}$ is a feasible solution of (\ref{KKTd}), so that $G(\mathbf{x})\le0$, but in fact $G(\mathbf{x})=0$. Then the strictly positive multipliers $\mu_i$ should correspond to $i\in\I(\mathbf{x})$. By applying $(\id-\mathbf{H}(\mathbf{x}))$ to the main vector equation, we obtain
$$
(\id-\mathbf{H})\nabla f^T+\mu(\id-\mathbf{H})\nabla \mathbf{g}^T=\bcero.
$$
The condition on the signs of $\mu$ just mentioned, implies that the null vector is a convex combination of the vectors generating the Filippov extension of $\Psi$ at such $\mathbf{x}$. Therefore, this vector $\mathbf{x}$ is an equilibrium point for $\Psi$ (in the sense of Filippov). Recall that $\mathbf{h}(\mathbf{x})=\bcero$. 

2. Conversely, suppose now that $\mathbf{x}$ is an equlibrium for the Filippov extension of $\Psi$ with $G(\mathbf{x})=0$. The condition of being an equlibrium for $\Psi$ at a point of discontinuity, implies that 
\begin{gather}
\sum_{i\in\I(\mathbf{x})}\alpha_i\left(-\mathbf{h}(\mathbf{x})\nabla \mathbf{h}(\mathbf{x})-(\id-\mathbf{H}(\mathbf{x}))\nabla g_i(\mathbf{x})^T\right)+\alpha\left(-\mathbf{h}(\mathbf{x})\nabla \mathbf{h}(\mathbf{x})-(\id-\mathbf{H}(\mathbf{x}))\nabla f(\mathbf{x})^T\right)=\bcero,\nonumber\\
\alpha+\sum_{i\in\I(\mathbf{x})}\alpha_i=1, \alpha_i, \alpha\ge0.\nonumber
\end{gather}
Note that $\alpha>0$, for the situation $\alpha=0$ is impossible under our assumptions for the inequality constraints. 
This identity among the gradients of $f$ and the $g_i$'s leads to
$$
-\mathbf{h}(\mathbf{x})\nabla \mathbf{h}(\mathbf{x})=(\id-\mathbf{H}(\mathbf{x}))\left(\alpha\nabla f(\mathbf{x})^T+\sum_{i\in\I(\mathbf{x})}\alpha_i\nabla g_i(\mathbf{x})^T\right).
$$
But since the left-hand side belongs to $\H$, and the right-hand side to its orthogonal complement, conclude that in fact
$$
\bcero=\mathbf{h}(\mathbf{x})\nabla \mathbf{h}(\mathbf{x})=(\id-\mathbf{H}(\mathbf{x}))\left(\alpha\nabla f(\mathbf{x})^T+\sum_{i\in\I(\mathbf{x})}\alpha_i\nabla g_i(\mathbf{x})^T\right).
$$
In particular, because $\nabla \mathbf{h}(\mathbf{x})$ cannot be the null vector, $\mathbf{h}(\mathbf{x})=\bcero$. Besides
$$
\alpha\nabla f(\mathbf{x})+\sum_{i\in\I(\mathbf{x})}\alpha_i\nabla g_i(\mathbf{x})-\mathbf{H}(\mathbf{x})\left(\alpha\nabla f(\mathbf{x})^T+\sum_{i\in\I(\mathbf{x})}\alpha_i\nabla g_i(\mathbf{x})^T\right)=\bcero.
$$
Dividing through by $\alpha>0$, and renaming terms, we finally arrive at
$$
\nabla f(\mathbf{x})+\mu \nabla \mathbf{g}(\mathbf{x})+\lambda\nabla \mathbf{h}(\mathbf{x})=\bcero,
$$
for appropriate $\mu\ge\bcero$, and $\lambda$, with $\mu \mathbf{g}(\mathbf{x})=0$. 

3. We finally turn to the important stability issue. Let $\overline{\mathbf{x}}$ be such an equilibrium point for (the Filippov extension of) $\Psi$, and let $\mathbf{x}$ be a nearby point. We can have various possibilities. First, suppose that $\mathbf{h}(\mathbf{x})$ is not the zero vector. It is clear that, as indicated already earlier,  $\langle\Psi(\mathbf{x}), \nabla \mathbf{h}(\mathbf{x})\rangle$ is negative, so that integral curves will tend to a point where $\mathbf{h}$ vanishes. We can therefore put $\mathbf{h}(\mathbf{x})=\bcero$. Suppose, in addition, that $G(\mathbf{x})>0$ having $\mathbf{h}(\mathbf{x})=\bcero$ (in particular if $G(\overline{\mathbf{x}})>0$), because some $g_i(\mathbf{x})>0$, and $\I(x)=\{i\}$. Again, it is clear that $\langle\Psi(\mathbf{x}), \nabla g_i(\mathbf{x})\rangle\le0$ becasue $\Psi(\mathbf{x})=-(\id-\mathbf{H})\nabla g_i$. 
This could occur for more than one component when $G(\mathbf{x})=g_i(\mathbf{x})$ for several indices $i$. But the sets of those points $\mathbf{x}$'s in a vicinity of $\overline{\mathbf{x}}$ where this happens have zero measure, and by continuity we would also have that integral curves for $\Psi$ would have to decrease all those $g_i$'s. 
Therefore, integral curves would tend to $G=0$. In particular, stable equilibria $\overline{\mathbf{x}}$ cannot occur when $G$ is strictly positive as desired. 
If $G(\mathbf{x})<0$ (having $\mathbf{h}(\mathbf{x})=\bcero$), then $\langle\Psi(\mathbf{x}), \nabla f(\mathbf{x})\rangle\le0$ and integral curves move decreasing $f$. Finally, suppose $G(\mathbf{x})=0$, $\mathbf{h}(\mathbf{x})=\bcero$. In a suitable vicinity of $\overline{\mathbf{x}}$, $\I(\overline{\mathbf{x}})=\I(\mathbf{x})$ because these sets of indices are finite. Therefore, the Filippov extension in such points $\mathbf{x}$ will correspond to a certain convex combination of $\{\nabla g_i(\mathbf{x}): i\in \I(\overline{\mathbf{x}})\}$ and $\nabla f(\mathbf{x})$, belonging to the tangent space of the manifold $\{g_i(y)=0: i\in\I(\overline{\mathbf{x}})\}$ at $\mathbf{x}$. This implies that
$$
\langle(\id-\mathbf{H})\sum_i \mu_i\nabla g_i^T+(\id-\mathbf{H})\nabla f^T, (\id-\mathbf{H})\nabla g_i^T\rangle=\bcero,\quad i\in\I(\overline{\mathbf{x}}).
$$
We conclude in this case that
$$
\langle(\id-\mathbf{H})\sum_i \mu_i\nabla g_i^T+(\id-\mathbf{H})\nabla f^T, (\id-\mathbf{H})\nabla f^T\rangle\ge0,
$$
by the following elementary lemma.
\begin{lemma}
Let $\{\mathbf{v}_i\}$ be a finite collection of vectors in a certain finite-dimensional vector space $V$ endowed with a inner product $\langle\cdot, \cdot\rangle$. Suppose that for a certain convex combination $\mathbf{v}=\sum_i\alpha_i\mathbf{v}_i$ with $\alpha_1>0$, we have that
$$
\langle \mathbf{v}, \mathbf{v}_j\rangle=0, \hbox{ for all }j\ge2.
$$
Then
$$
\langle \mathbf{v}, \mathbf{v}_1\rangle=\frac{|\mathbf{v}|^2}{\alpha_1}\ge0.
$$
\end{lemma}
\end{proof}
%================
\section{Linear programming}
%================
Our last section is devoted to writing explicitly the field $\Psi$ for a linear programming problem, and check hypotheses so that Theorem (\ref{variasdes}) can be applied. As indicated in the Introduction, we plan to focus more fully in linear programming from this perspective in the near future.

Consider the general linear programming problem
\begin{equation}\label{proglin}
\hbox{Minimize in }\mathbf{x}\in\R^N:\quad \mathbf{c}\mathbf{x}\quad\hbox{subject to}\quad \mathbf{A}\mathbf{x}=\mathbf{a}, \mathbf{B}\mathbf{x}\le \mathbf{b},
\end{equation}
where $\mathbf{c}\in\R^N$, $\mathbf{A}\in\M^{n\times N}$, $\mathbf{a}\in\R^n$, $\mathbf{B}\in\M^{m\times N}$, $\mathbf{b}\in\R^m$. 
The field $\Psi$ reads in this particular situation
$$
\Psi(\mathbf{x})=\begin{cases}-(\mathbf{A}\mathbf{x}-\mathbf{a})\mathbf{A}-\left(\id-\mathbf{H}\right)\G(\mathbf{x})^T,& G(\mathbf{x})>0,\\
-(\mathbf{A}\mathbf{x}-\mathbf{a})\mathbf{A}-\left(\id-\mathbf{H}\right)\mathbf{c}^T,& G(\mathbf{x})\le0,
\end{cases}
$$
where  
\begin{enumerate}
\item $\mathbf{H}=\mathbf{A}^T(\mathbf{A}\mathbf{A}^T)^{-1}\mathbf{A}$ is a constant, $N\times N$-matrix;
\item $G(\mathbf{x})=\max\{\mathbf{B}\mathbf{x}-\mathbf{b}\}$, the maximum taken over the components of $\mathbf{B}\mathbf{x}-\mathbf{b}$;
\item $\G(\mathbf{x})=\sum_{i\in\I(\mathbf{x})}\mathbf{B}_i$ if $\mathbf{B}_i$ are the rows of $\mathbf{B}$, and $\I(\mathbf{x})=\{i: \mathbf{B}_i\mathbf{x}-b_i=G(\mathbf{x})\}$. 
\end{enumerate}
Then we have a corollary of Theorem \ref{probgenn} (also Theorem \ref{variasdes}) applied to (\ref{proglin}). 
\begin{corollary}
Suppose that
\begin{itemize}
\item the matrix $\mathbf{A}\mathbf{A}^T$ is an invertible $n\times n$-matrix;
\item for each $\mathbf{x}$ where $G(\mathbf{x})=0$, the intersection of the subspaces $\H$, spanned by the rows of $\mathbf{A}$, and the one spanned by the rows $i$ of $\mathbf{B}$ where $\mathbf{B}_i\mathbf{x}-b_i=G(\mathbf{x})$, is trivial;
\item the null vector is never in the convex hull of the set $\{\mathbf{B}_i\}$ for $i\in\I(\mathbf{x})$ when $G(\mathbf{x})=0$. 
\end{itemize}
Then every integral curve for $\Psi$ (in the Filippov sense) converges to the optimal solution of (\ref{proglin}), if such linear programming problem admits a unique solution. 
\end{corollary}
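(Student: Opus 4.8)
The plan is to deduce this corollary from Theorem \ref{variasdes} together with the linear structure of (\ref{proglin}), treating separately the identification of the equilibrium and the global convergence of trajectories. First I would verify that the three bulleted hypotheses are exactly the specialization of those in Theorem \ref{variasdes}. Since $f(\mathbf{x})=\mathbf{c}\mathbf{x}$, $\mathbf{h}(\mathbf{x})=\mathbf{A}\mathbf{x}-\mathbf{a}$ and $g_i(\mathbf{x})=\mathbf{B}_i\mathbf{x}-b_i$, the gradients $\nabla f=\mathbf{c}$, $\nabla\mathbf{h}=\mathbf{A}$ and $\nabla g_i=\mathbf{B}_i$ are constant, so $\nabla\mathbf{h}\,\nabla\mathbf{h}^T=\mathbf{A}\mathbf{A}^T$, the subspace $\H$ is spanned by the rows of $\mathbf{A}$, and the active constraint gradients are precisely the active rows $\mathbf{B}_i$. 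The three listed conditions thus reproduce the first three hypotheses of Theorem \ref{variasdes}. For the fourth, uniqueness of the optimal solution $\mathbf{x}^*$ forces the KKT system (\ref{KKTd}) to have a single solution, which is in particular isolated; and since (\ref{proglin}) is a convex program, every local minimum is global, so $\mathbf{x}^*$ is the one and only local solution. Theorem \ref{variasdes} then yields that $\mathbf{x}^*$ is the unique asymptotically stable equilibrium of $\Psi$ in the Filippov sense.

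It remains to promote asymptotic stability to global convergence. Here I cannot invoke the coercivity argument of the earlier corollaries, since $\mathbf{c}\mathbf{x}$ is not coercive; instead I would exploit the layered Lyapunov structure together with the piecewise-affine character of the dynamics. As in the proof of Theorem \ref{unades}, the quantity $|\mathbf{h}(\mathbf{x})|^2=|\mathbf{A}\mathbf{x}-\mathbf{a}|^2$ satisfies $\langle\Psi,\nabla(|\mathbf{h}|^2)\rangle\le0$ on both branches, so every trajectory is driven to, and thereafter confined within, the equality-feasible affine subspace $\mathbf{A}\mathbf{x}=\mathbf{a}$. On that subspace, where $G>0$ the function $G$ decreases along $\Psi$ and pushes the curve toward $G\le0$, whereas in the feasible region $G\le0$ the field reduces, once $\mathbf{h}=\bcero$, to the constant vector $-(\id-\mathbf{H})\mathbf{c}^T$, so that $\langle\Psi,\nabla f\rangle=-|(\id-\mathbf{H})\mathbf{c}^T|^2\le0$ and the objective is decreased at a uniform rate.

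Finally I would trace the feasible phase of the motion. Because $\nabla\mathbf{h}$, $\nabla f$ and the $\mathbf{B}_i$ are constant, inside each region of continuity the projected field is a fixed vector and the integral curves are straight segments; when such a segment meets an active face $\mathbf{B}_i\mathbf{x}-b_i=0$ across which $G$ would increase, the two one-sided limits of $\Psi$ point toward the face, the Filippov mechanism selects a sliding mode tangent to it, and the analysis of Theorem \ref{variasdes} shows that $f$ keeps decreasing within the feasible polyhedron until a KKT point is reached. Since (\ref{proglin}) admits an optimal solution, $f$ is bounded below on the feasible set; moreover the closed feasible polyhedron is positively invariant once entered, as the sliding motion keeps the curve on its faces. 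A LaSalle-type invariance argument along the descent chain $|\mathbf{h}|^2\to G\to f$ then forces every trajectory to converge to the unique equilibrium $\mathbf{x}^*$.

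The main obstacle is precisely this last step. Without coercivity one must both rule out escape to infinity during the transient that reaches feasibility, and guarantee that the sliding motion along the faces of the polyhedron cannot come to rest anywhere except at $\mathbf{x}^*$. I expect to handle the first by controlling $|\mathbf{h}|^2$ and $G$ together so that the curve reaches the feasible region in finite time and stays bounded there, and the second by combining the strict decrease of $f$ granted by (\ref{semilya})-type inequalities with the uniqueness of the solution of (\ref{KKTd}), which leaves $\mathbf{x}^*$ as the only admissible $\omega$-limit point.
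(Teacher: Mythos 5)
Your first paragraph---specializing the data ($\nabla f=\mathbf{c}$, $\nabla\mathbf{h}=\mathbf{A}$, $\nabla g_i=\mathbf{B}_i$), matching the three bullets to the hypotheses of Theorem \ref{variasdes}, and using convexity of the LP to identify the unique optimum with the unique, hence isolated, KKT point---is correct, and it is in fact everything the paper itself offers: the corollary is stated there as a direct application of Theorem \ref{variasdes} with no written proof. You are right, however, that this cannot be the whole story: Theorem \ref{variasdes} characterizes the \emph{asymptotically stable equilibria}, a local notion, whereas the corollary asserts that \emph{every} integral curve converges to $\mathbf{x}^*$. That promotion from a local statement to global convergence is genuine additional content, and it is exactly where your proposal, by your own admission, remains a sketch rather than a proof.

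Concretely, three gaps stand. First, boundedness: $\mathbf{c}\mathbf{x}$ is not coercive, the quantity $|\mathbf{A}\mathbf{x}-\mathbf{a}|^2$ controls only the motion transverse to the affine subspace, and your chain $|\mathbf{h}|^2\to G\to f$ produces monotone quantities but no compactness; a LaSalle-type argument requires a bounded trajectory and, for Filippov solutions, a nonsmooth version of the invariance principle, neither of which you establish. (Also, trajectories reach $\mathbf{A}\mathbf{x}=\mathbf{a}$ only exponentially, not ``in finite time'', since $\frac{d}{dt}|\mathbf{h}|^2=-2|\mathbf{A}^T(\mathbf{A}\mathbf{x}-\mathbf{a})|^2$.) Second, your claim that $G$ strictly decreases where $G>0$ needs $(\id-\mathbf{H})\mathbf{B}_i^T\neq\bcero$ for the rows active there, but the corollary's second and third bullets constrain only points with $G(\mathbf{x})=0$; if some row $\mathbf{B}_i$ lies in the row space of $\mathbf{A}$, then every point of the set where $\mathbf{A}\mathbf{x}=\mathbf{a}$, $\I(\mathbf{x})=\{i\}$ and $G>0$ is an equilibrium, and a curve starting there never reaches $\mathbf{x}^*$; an honest proof must rule this out or add a hypothesis. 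Third, excluding limit cycles and non-convergent sliding motion is real work: the paper itself warns, citing \cite{llibreponce}, that such discontinuous piecewise-linear systems can exhibit limit cycles (there, when no optimal solution exists), so your invariance argument would have to show that $f$ strictly decreases along every non-equilibrium Filippov solution on the feasible polyhedron---plausible for LP, but not carried out. In short: your hypothesis-checking reproduces the paper's (implicit) argument, but the global-convergence step, which the paper leaves unstated, is the actual content of the corollary and remains unproven in your proposal.
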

When (\ref{proglin}) has a unique solution, 
it will be a global equilibria for the corresponding discontinuous dynamical system.
If, on the contrary, problem (\ref{proglin}) does not admit a solution or it is unfeasible, then the field $\Psi$ cannot have equilibria, and integrals curves either have to veer off to infinity, or they may tend to limit cycles. It may be worth to explore the non-continuous dynamics underlying linear mathematical programming problems. Even for very simple examples, the dynamics may be far from trivial (see \cite{gughai}, \cite{llibreponce}). 

%=============================

\end{document}